\sloppy\pagestyle{plain}
\newcounter{cequation}[section]
\newtheorem{theorem}[cequation]{Theorem}
\newtheorem{proposition}[cequation]{Proposition}
\newtheorem{lemma}[cequation]{Lemma}
\newtheorem{corollary}[cequation]{Corollary}
\theoremstyle{definition}
\newtheorem{example}[cequation]{Example}
\theoremstyle{remark}
\newtheorem{remark}[cequation]{Remark}
\makeatletter\@addtoreset{equation}{section}
\newcommand{\tabfont}[1]{{}_{#1}}
\def\P {\mathbb{P}}
\def\C {\mathbb{C}}
\def\Q {\mathbb{Q}}
\def\WW {\mathbb{W}}
\def\II {\mathbb{I}}
\def\VV {\mathbb{V}}
\def\A {\mathfrak{A}}
\def\SS {\mathfrak{S}}
\def\BB {\mathcal{B}}
\def\LL {\mathcal{L}}
\def\PSp {\mathrm{PSp}_4(\mathbf{F}_3)}
\def\Cl {\mathrm{Cl}}
\def\rkCl {\mathrm{rk\,Cl}}
\def\vstrut {\vphantom{$\sqrt{A^A}$}}
\def\le {\leqslant}
\author[Cheltsov, Przyjalkowski, Shramov]{Ivan Cheltsov, Victor Przyjalkowski, and Constantin Shramov}
\title{Burkhardt quartic, Barth sextic, and the icosahedron}
\address{\emph{Ivan Cheltsov}
\newline
\textnormal{School of Mathematics, The University of Edinburgh.
}
\newline
\textnormal{National Research University Higher School of Economics, Russian Federation.
}
\newline
\textnormal{\texttt{I.Cheltsov@ed.ac.uk}}}
\address{\emph{Victor Przyjalkowski}
\newline
\textnormal{Steklov Mathematical Institute of Russian Academy of Sciences, Moscow, Russia.
}
\newline
\textnormal{National Research University Higher School of Economics, Russian Federation.
}
\newline
\textnormal{\texttt{victorprz@mi.ras.ru, victorprz@gmail.com}}}
\address{\emph{Constantin Shramov}
\newline
\textnormal{Steklov Mathematical Institute of Russian Academy of Sciences, Moscow, Russia.
}
\newline
\textnormal{National Research University Higher School of Economics, Russian Federation.
}
\newline
\textnormal{\texttt{costya.shramov@gmail.com}}}
\begin{document}

\begin{abstract}
We study two rational Fano threefolds with an action of the icosahedral group~$\A_5$.
The first one is the famous Burkhardt quartic threefold,
and the second one is the double cover of the projective space branched in the Barth sextic surface.
We prove that both of them are $\A_5$-Fano varieties that are $\A_5$-birationally superrigid.
This gives two new embeddings of the group $\A_5$ into the space Cremona group.
\end{abstract}

\sloppy

\maketitle

\section{Introduction}
\label{section:intro}

One of the interesting subjects in three-dimensional birational geometry is studying $G$-Fano threefolds (see \cite{Prokhorov1}),
where $G$ is a finite group.
These are Fano threefolds with terminal singularities such that the rank of their $G$-invariant class group equals~$1$.
Finding enough details about their $G$-equivariant (biregular) geometry often leads
to conclusions about the absence of $G$-equivariant birational maps between them,
see e.g.~\cite{ChSh09b} and \cite{CheltsovShramov}.
It is a usual case that on this way one encounters really beautiful geometric constructions arising from
large groups of symmetries of the corresponding varieties.

The icosahedral group $\A_5$ appears as a group of symmetries for a remarkably large class of Fano threefolds,
and many of them are $\A_5$-Fano threefolds, see e.g. \cite{CheltsovShramov}, \cite{CheltsovPrzyjalkowskiShramov}, and~\cite{PrzyalkowskiShramov}.
In this paper we study two rational Fano threefolds with an action of the group $\A_5$.
Both of these threefolds are singular and have only nodes as singularities;
moreover, they are ``extremal'' in some sense, which makes their geometry more interesting.
We prove that the ranks of their $\A_5$-invariant class groups equal~$1$, so that both of them are $\A_5$-Fano threefolds.
We use this to prove that they are $\A_5$-birationally superrigid, see \cite[\S3.1]{CheltsovShramov}
for a definition.

Consider the projective space $\P^5$ with homogeneous coordinates $x_0,\ldots,x_5$.
Denote by~\mbox{$\sigma_k(x_0,\ldots,x_5)$} the $k$-th elementary symmetric polynomial in $x_0,\ldots,x_5$.
The \emph{Burkhardt quartic} is defined in $\P^5$ by equations
$$
\sigma_1(x_0,\ldots,x_5)=\sigma_4(x_0,\ldots,x_5)=0.
$$
This quartic threefold was first described by Burkhardt in~\cite{Burkhardt}.
It has $45$ nodes, which is actually the largest possible number of isolated singularities for quartic threefolds, see~\cite{deJong} (cf.~\cite{Prokhorov-factorial}).

The Burkhardt quartic is known to be rational (see~\cite{Todd}), and its automorphism group is isomorphic to $\PSp$, see~\cite[\S6]{Coble} and~\cite{deJong}.
Let the group $\SS_6$ act on $\P^5$ by permutations of homogeneous coordinates $x_0,\ldots,x_5$. Then $\SS_6$ preserves the Burkhardt quartic.

Let us refer to a subgroup $\A_5$ in $\SS_6$ that fixes one of the homogeneous coordinates~\mbox{$x_0,\ldots,x_5$} as a \emph{standard} subgroup $\A_5$,
and to a subgroup $\A_5$ of $\SS_6$ that does not fix any of these  coordinates as a \emph{non-standard} subgroup $\A_5$. Our first result
is

\begin{theorem}
\label{theorem:Burkhardt}
Let $G$ be a standard subgroup $\A_5$ in $\SS_6$.
The Burkhardt quartic is a rational $G$-Fano threefold that is $G$-birationally superrigid.
\end{theorem}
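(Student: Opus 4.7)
The argument splits into checking the $G$-Fano condition and proving $G$-birational superrigidity; rationality is already recorded.

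For the $G$-Fano property, the fact that $X$ has only nodes means it is terminal Gorenstein, and $-K_X$ is the restriction of the hyperplane class $H$ from $\P^5$, so $X$ is Fano. What remains is to show $\rkCl^G(X) = 1$. The rank of $\Cl(X)$ exceeds $1$ because of the defect coming from the $45$ nodes, and the natural set of generators is the $40$ Burkhardt planes together with $H$. I would first describe $\Cl(X) \otimes \Q$ as a $\PSp$-representation using these planes and the linear relations they satisfy, then restrict to the standard subgroup $G \subset \SS_6 \subset \PSp$, decompose into $G$-irreducibles, and check that the $G$-invariant part is one-dimensional and spanned by $H$. Equivalently, a direct inspection of the $G$-orbits among the $40$ planes shows that no non-trivial $G$-invariant $\Q$-combination is equivalent to a nonzero multiple of $H$ other than that coming from $H$ itself.

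For $G$-birational superrigidity I use the Noether--Fano method. Assume a $G$-equivariant birational map $\chi : X \dashrightarrow Y$ to another $G$-Mori fibre space that is not $G$-biregular, and let $\mathcal{M} = \chi^{-1}_* |A|$ for a very ample system $|A|$ on $Y$; then $\mathcal{M}$ is a $G$-invariant mobile linear system with $\mathcal{M} \sim_{\Q} -nK_X$, and $(X, \frac{1}{n}\mathcal{M})$ is non-canonical along some $G$-invariant centre $Z$. Three kinds of $Z$ must be excluded. A $G$-orbit of smooth points requires $\mathrm{mult}_p\mathcal{M} > 2n$, which I would rule out by showing that the $G$-equivariant global log canonical threshold of $X$ is at least $1$, using low-degree $G$-invariant surfaces such as $\{\sigma_k = 0\} \cap X$ and related invariant auxiliaries. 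A $G$-orbit of nodes is excluded by enumerating the $G$-orbits on the $45$ nodes (which are few) and applying an inversion-of-adjunction argument on a general $G$-invariant hypersurface through the orbit, making use of the local class group of rank $2$ at each node. Finally, if $Z$ contains a $G$-invariant curve $C$ with $\mathrm{mult}_C \mathcal{M} > n$, intersecting two general members $M_1, M_2 \in \mathcal{M}$ with $-K_X$ and using $(-K_X)^3 = 4$ yields
\begin{equation*}
4n^2 \;=\; M_1 \cdot M_2 \cdot (-K_X) \;\ge\; \sum_{C' \in G\cdot C}\mathrm{mult}_{C'}(\mathcal{M})^2 \deg C' \;>\; n^2 \sum_{C' \in G\cdot C}\deg C',
\end{equation*}
so the total degree of the $G$-orbit of $C$ is at most $3$.

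The main obstacle will be the last case: classifying $G$-invariant configurations of curves on $X$ of total degree at most $3$ and excluding each. Since the standard subgroup $G$ fixes $x_0$ and acts on $x_1,\dots,x_5$ by even permutations, the $G$-module structure on $\C^6/\langle \sigma_1 \rangle$ splits as $\C \oplus W$ with $W$ the standard $4$-dimensional irreducible $\A_5$-representation, and the task reduces to identifying small $\A_5$-subrepresentations in the spaces of linear, quadratic, and cubic forms cutting out candidate curves that also lie on $\sigma_4 = 0$. Each surviving orbit of curves must then be knocked out by producing a $G$-invariant reducible surface whose intersection with $\mathcal{M}$ refines the multiplicity bound to a contradiction. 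The parallel combinatorial task in the $G$-Fano part, determining the $G$-orbits among the $40$ planes, should be considerably simpler, as is the finite enumeration of $G$-orbits of nodes.
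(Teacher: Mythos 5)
Your overall architecture (Noether--Fano inequality, then a case analysis of curve, smooth-point and singular-point centres, with the $4n^2$-type degree count) is the same as the paper's, and your curve inequality correctly forces the $G$-invariant centre to have total degree at most $3$; since $\P^4$ contains no $G$-invariant lines or planes, the only candidate is a $G$-invariant twisted cubic lying in the smooth locus of $X$. But the step you yourself flag as the main obstacle --- actually excluding that cubic --- is left as a vague plan (``producing a $G$-invariant reducible surface''), whereas this is where the paper does concrete work: it blows up the twisted cubic $Z$, observes that $f^{*}(2H)-F$ is nef because $Z$ is a scheme-theoretic intersection of quadrics, and derives $0\le 8n^2-6nm-5m^2<0$. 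Two of your other exclusion steps rest on incorrect claims. At a smooth point $P$, non-canonicity of $(X,\tfrac1n\mathcal{M})$ does \emph{not} force $\mathrm{mult}_P\mathcal{M}>2n$ (only $>n$), and a bound on the $G$-equivariant global log canonical threshold controls log canonicity of \emph{divisors}, not canonicity of mobile systems, so it is the wrong tool; the standard and short argument is Corti's inequality $\mathrm{mult}_P(M_1\cdot M_2)>4n^2$ against $M_1\cdot M_2\cdot H_P=4n^2$. At a node, the local analytic class group is $\mathbb{Z}$, not of rank $2$, and your inversion-of-adjunction sketch is not developed; the paper instead picks four non-coplanar points of the $G$-orbit, blows them up, gets $\delta>n$ from Corti's theorem at an ordinary double point, and contradicts nefness of $g^{*}(2H)-\sum E_i$ via $0\le 8n^2-8\delta^2$.

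On the $G$-Fano half, your representation-theoretic plan is viable but hides the genuine difficulty: character theory alone only shows that for \emph{one} of the two conjugacy classes of $\A_5$ in $\PSp$ the invariant part of $\Cl(X)_{\Q}$ is one-dimensional while for the other it is two-dimensional, and it does not tell you which class is the standard one. The paper resolves this by an explicit computation: the $40$ $j$-planes split into two $G$-orbits of length $20$, the $20\times 20$ intersection matrix of either orbit has rank $16=\rkCl(X)$ (Kaloghiros), so a single $G$-orbit of planes generates $\Cl(X)_{\Q}$, and averaging over $G$ then forces every $G$-invariant class to be proportional to $-K_X$. Your alternative ``direct inspection of the $G$-orbits among the $40$ planes'' is pointing in this direction, but without the generation statement and the averaging argument it does not close the case.
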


It should be pointed out that if $G^\prime$ is a non-standard subgroup $\A_5$ in $\SS_6$, then the Burkhardt quartic is \emph{not} a $G^\prime$-Fano threefold (see Remark~\ref{remark:non-standard} below).

Now consider the projective space $\P^3$ that is a projectivization of a non-trivial reducible four-dimensional representation of the group $\A_5$.
There exists a unique $\A_5$-invariant sextic surface $\BB$ with $65$ nodes in $\P^3$,
which is actually the largest possible number of nodes for a sextic surface,
see~\cite{JR}.
This surface was discovered by Barth in~\cite{Barth} and is usually called the \emph{Barth sextic}.
Our second result is

\begin{theorem}
\label{theorem:Barth}
The double cover of $\P^3$ branched over the surface $\BB$ is a rational $\A_5$-Fano threefold
that is $\A_5$-birationally superrigid.
\end{theorem}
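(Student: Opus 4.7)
The plan is to verify, in order, that (i) $X$ has terminal singularities and $-K_X$ is ample, (ii) $\rkCl(X)^{\A_5}=1$, (iii) $X$ is rational, and (iv) $X$ is $\A_5$-birationally superrigid. Step (i) is essentially immediate: since $\BB$ has only nodes, the double cover $\pi\colon X\to\P^3$ has exactly $65$ nodes (one over each node of $\BB$) and is smooth elsewhere, so $X$ has terminal Gorenstein singularities; the adjunction formula gives $K_X=\pi^*(K_{\P^3}+3H)=-\pi^*H$, so $-K_X$ is ample with $(-K_X)^3=2$, and the $\A_5$-action on $\P^3$ lifts canonically to $X$.

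Step (ii) is the main structural input. I would write $\Cl(X)=\mathbb{Z}\cdot\pi^*H\oplus D$, where the defect subgroup $D$ is generated by Weil divisor classes that fail to be Cartier at the nodes. To show $D^{\A_5}=0$, I would first work out the $\A_5$-orbit decomposition of the $65$ nodes (each orbit length must divide $60$), and then produce enough explicit $\A_5$-invariant Weil divisors on $X$ --- such as the ramification divisor and strict transforms of $\A_5$-invariant quadric, cubic, or sextic surfaces in $\P^3$ cut out by semi-invariants of the reducible four-dimensional representation --- to span $(\Cl(X)\otimes\Q)^{\A_5}$ modulo $\Q\cdot\pi^*H$. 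Equivalently, one can pass to a small $\mathbb{Q}$-factorialization $\widehat X\to X$ and check that $\rkPic(\widehat X)^{\A_5}=1$ by comparing the invariant part of the node lattice with the classes of invariant surfaces.

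For rationality, $\A_5$-equivariance is not required, so I would rely on a classical construction: either a linear projection from an appropriate $\A_5$-invariant configuration of points or lines in $\P^3$ pulled back to $X$, or an invariant pencil of quadrics whose base locus lies on $\BB$, exhibiting the double cover as birational to $\P^3$. This part is essentially a direct computation with the explicit equation of $\BB$ in the fixed coordinates.

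For $\A_5$-birational superrigidity I would apply the $\A_5$-equivariant Noether--Fano method as in \cite[\S3.1]{CheltsovShramov}: a hypothetical non-biregular $\A_5$-equivariant birational map to another $\A_5$-Mori fiber space produces an $\A_5$-invariant movable linear system $\mathcal{M}\subset|-nK_X|$ such that $(X,\tfrac{1}{n}\mathcal{M})$ is not canonical, and any center $Z$ of non-canonicity is $\A_5$-invariant. I would then rule out $\dim Z=2$ numerically, $\dim Z=1$ by classifying all $\A_5$-invariant curves on $X$ and bounding $\mathcal{M}\cdot Z$, and $\dim Z=0$ by enumerating short $\A_5$-orbits on $X$ (both smooth points and nodes) and applying the standard multiplicity inequalities --- the $4n^2$-inequality at smooth points and its node analogue at singular ones. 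The hardest step will be this last case analysis: one must classify all $\A_5$-orbits of length at most $60$ on $X$, most of which arise from lifts of special orbits on $\P^3$ and from the orbits on the $65$ nodes, and for each one exhibit an explicit effective $\A_5$-invariant divisor with controlled self-intersection certifying that the orbit cannot be a maximal center.
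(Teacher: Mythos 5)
Your steps (i) and (iv) are sound in outline: (i) is standard, and for (iv) the paper itself does not redo the exclusion of maximal centers --- once $\Cl(X)^{\A_5}=\mathbb{Z}\cdot(-K_X)$ is known, the equivariant Noether--Fano inequality produces a mobile invariant system $\mathcal{M}\subset|-nK_X|$ with $(X,\tfrac{1}{n}\mathcal{M})$ non-canonical, and the exclusion of all possible centers is exactly the content of the proof of Theorem~A of \cite{CheltsovPark}, whose local arguments do not use $\Q$-factoriality. There are, however, two genuine gaps. First, in step (ii) your logic is inverted: exhibiting explicit $\A_5$-invariant Weil divisors can only bound $\rkCl(X)^{\A_5}$ from \emph{below}, whereas what is needed is an upper bound. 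The paper obtains it from an elementary but essential observation (Lemma~\ref{lemma:Cl-Q} and Corollary~\ref{corollary:Cl-Q}): if one can find Weil divisors forming a \emph{single} $\A_5$-orbit whose classes generate $\Cl(X)_{\Q}$, then averaging over the group shows the invariant part is one-dimensional. Concretely, one takes the $20$ surfaces $\Xi_{v}^{+}$ (halves of the preimages of the $20$ planes on which $\BB$ restricts to a double cubic), computes their $20\times 20$ intersection matrix on a general anticanonical $K3$ surface, checks that its rank equals $14$, and invokes Endrass's computation $\rkCl(X)=14$ (Lemma~\ref{lemma:Stefan}) to conclude that these classes generate $\Cl(X)_{\Q}$. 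Your alternative via the node lattice could in principle work, but you would have to determine the $13$-dimensional defect part as an $\A_5$-representation and show it has no trivial summand, and you give no indication of how to do this.

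Second, rationality in step (iii) is a substantive claim --- a general nodal sextic double solid is irrational, which is precisely why \cite{CheltsovPark} proves rigidity --- and neither of your two suggested constructions works as stated. The proof (Proposition~\ref{proposition:Barth-rational}) hinges on the special shape of the Barth sextic: its equation is $q_3^2-4l_1l_2l_3l_4l_5l_6$ with the $l_i$ linear, so $X$ is $w^2=q_3^2-4l_1\cdots l_6$; the substitution $w=2yl_1l_2+q_3$ transforms $X$ birationally into a quartic threefold containing a plane, and projection from that plane yields a fibration into cubic surfaces, each containing two disjoint lines defined over $\C(\lambda)$, whence rationality. Without identifying this structural feature (or an equivalent one), ``a direct computation with the explicit equation'' will not produce a proof.
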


Theorems~\ref{theorem:Burkhardt} and~\ref{theorem:Barth}
give two embeddings of the group $\A_5$ into the \emph{space Cremona group} that are not conjugate to each other.
Moreover, these embeddings are also not conjugate to any of the four embeddings described in
\cite[Remark~1.2.1]{CheltsovShramov},
\cite[Example~1.3.9]{CheltsovShramov},
\cite[Theorem~1.4.1]{CheltsovShramov}, and~\cite[Theorem~4.2]{CheltsovPrzyjalkowskiShramov}.

\medskip
\textbf{Notation and conventions.}
All varieties are defined over the field $\C$ of complex numbers.
By $\Cl(X)$ we denote the group of linear equivalence classes of Weil divisors
on a variety~$X$.
If a variety $V$ is acted on by a finite group $G$, and $Z$ is a subvariety of $V$,
we will sometimes abuse terminology and refer to the union of the images $g(Z)$, $g\in G$, as the \emph{$G$-orbit} of $Z$.
By a node we always mean an
isolated singularity that is locally isomorphic to the singularity of a cone over a smooth quadric of
the appropriate dimension.

\medskip
\textbf{Acknowledgements.}
We are grateful to A.\,Kuznetsov, D.\,Pasechnik, V.\,Popov, Yu.\,Prokhorov, and L.\,Rybnikov for useful discussions.
The study has been funded by the Russian Academic Excellence Project ``5-100''.
V.\,Przyjalkowski was 
 partially supported by Laboratory of Mirror Symmetry NRU HSE, RF Government grant, ag. № 14.641.31.0001.
C.\,Shramov was also supported by the grants
RFFI 15-01-02158, RFFI 15-01-02164, RFFI 14-01-00160, and by Dynasty foundation.

\section{Burkhardt quartic}
\label{section:Burkhardt}

In this section we prove Theorem~\ref{theorem:Burkhardt}.
Recall that the Burkhardt quartic $X$ is given in the projective space
$\P^5$ with homogeneous coordinates $x_0,\ldots,x_5$
by equations
$$
\sigma_1(x_0,\ldots,x_5)=\sigma_4(x_0,\ldots,x_5)=0,
$$
where $\sigma_k(x_0,\ldots,x_5)$ is the $k$-th elementary symmetric polynomial in $x_0,\ldots,x_5$.
The quartic~$X$ is rational,
and its automorphism group is isomorphic to $\PSp$.
A subgroup~\mbox{$\SS_6\subset\PSp$}
that acts by permutations of the homogeneous coordinates~\mbox{$x_0,\ldots,x_5$}
preserves~$X$. The Burkhardt quartic
has~$45$ singularities that form two $\SS_6$-orbits:
one is the $\SS_6$-orbit of length $30$ of the point
$$
[1:1:\omega:\omega:\omega^2:\omega^2],
$$
and the other is the $\SS_6$-orbit of length $15$ of the point
$$
[1:-1:0:0:0:0].
$$

\begin{remark}[{cf.~\cite[p.~26]{Atlas}}]
\label{remark:PSp-subgroups}
Up to conjugation, the group $\PSp$ contains a unique subgroup isomorphic
to $\SS_6$, and two subgroups isomorphic to $\A_5$. The latter agree with two non-conjugate
embeddings of $\A_5$ to $\SS_6\subset\PSp$.
\end{remark}

\begin{lemma}\label{lemma:Burkhardt-S6}
One has $\rkCl(X)^{\SS_6}=1$.
\end{lemma}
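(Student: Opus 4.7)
The plan is to bound $\rkCl(X)^{\SS_6}$ by $1$ from both sides. The lower bound is immediate, since the hyperplane class $H$ on $X \subset \P^4$ is $\SS_6$-invariant and non-torsion.

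For the upper bound, I would first argue that $\mathrm{Pic}(X) = \mathbb{Z} \cdot H$ by a Lefschetz-type argument for the nodal complete intersection $X \subset \P^5$ cut out by $\sigma_1$ and $\sigma_4$, so that $\mathrm{Pic}(X)^{\SS_6} = \mathbb{Z} H$. It then suffices to show that the defect lattice $\Cl(X)/\mathrm{Pic}(X) \otimes \Q$ has trivial $\SS_6$-invariants. I would invoke a Clemens--Griffiths-type description identifying this defect $\SS_6$-equivariantly with a graded piece of the Jacobian ring of the defining quartic on $\P^4 = \{\sigma_1 = 0\}$, or, equivalently, with an appropriate subquotient of the permutation representation $\C^{45}$ on the 45 nodes determined by an evaluation map from low-degree polynomials on $\P^4$.

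The 45 nodes split into two $\SS_6$-orbits $O_{15}$ and $O_{30}$, giving a 2-dimensional $\SS_6$-invariant subspace of $\C^{45}$. The crucial representation-theoretic computation uses the orbit representatives $[1{:}{-}1{:}0{:}0{:}0{:}0] \in O_{15}$ and $[1{:}1{:}\omega{:}\omega{:}\omega^2{:}\omega^2] \in O_{30}$ (with $\omega$ a primitive cube root of unity), together with the low-degree $\SS_6$-invariant polynomials on $\P^4$: the unique (up to scalar) invariant cubic $\sigma_3$ evaluates to $0$ on $O_{15}$ and $2$ on $O_{30}$, while $\sigma_2^2$ evaluates to $1$ on $O_{15}$ and $0$ on $O_{30}$; the generating function identity $\prod_i (1 + x_i t) = (1+t^3)^2$ at the point $(1,1,\omega,\omega,\omega^2,\omega^2)$ makes the second evaluations transparent. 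Moreover the partial derivatives $\partial\sigma_4/\partial x_i = \sigma_3(\hat x_i)$ of the defining quartic form, under $\SS_6$, a copy of $\mathrm{triv} \oplus \mathrm{std}_5$ whose invariant part recovers the unique invariant cubic $\sigma_3$. Combined, these computations should force the invariant part of the defect lattice to vanish, yielding $\rkCl(X)^{\SS_6} = 1$.

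\textbf{The main obstacle} is the precise $\SS_6$-equivariant setup of the Clemens--Griffiths identification for the Burkhardt quartic, in particular handling the passage between $\P^5$ and $\P^4$, correctly identifying the relevant graded piece of the Jacobian ring (or the correct evaluation subquotient of $\C^{45}$), and verifying that the invariant cubic $\sigma_3$ is captured inside the Jacobian ideal so that it does not contribute to the invariant defect. An alternative route, via a small $\SS_6$-equivariant $\Q$-factorialization of $X$ combined with analysis of the $\SS_6$-orbit structure on the $40$ planes of $X$, would require similar representation-theoretic input.
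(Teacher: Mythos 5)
Your plan is genuinely different from the paper's, which disposes of this lemma in one line: since $\P^5/\SS_6\cong\P(1,2,3,4,5,6)$ via the elementary symmetric functions, the quotient $X/\SS_6=\{\sigma_1=\sigma_4=0\}/\SS_6$ is the weighted projective space $\P(2,3,5,6)$, whence $\rkCl(X)^{\SS_6}=\rkCl(X/\SS_6)=1$. Your route (Lefschetz for the Picard group, then an equivariant defect computation at the $45$ nodes) can in principle be made to work, and your numerical inputs are correct: $\sigma_3$ spans the $\SS_6$-invariant cubics modulo $\sigma_1$, it vanishes on the orbit $O_{15}$ and takes the value $2$ at the given representative of $O_{30}$; note that the relevant degree for the evaluation map is $2\cdot 4-5=3$, so the quartic $\sigma_2^2$ plays no role here.

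However, the step you flag as ``the main obstacle'' is a genuine gap, and it is exactly where the content of the proof lies: the target of the evaluation map is not the permutation representation $\C^{45}$ but $\bigoplus_{p\in\Sing(X)}\mathcal{O}(3)\vert_p$, and the twist by the stabilizer action on the fibres changes the answer. Concretely, the stabilizer $\SS_2\times\SS_4$ of $[1:-1:0:0:0:0]$ acts on the fibre of $\mathcal{O}(1)$ there through the sign of the $\SS_2$-factor, hence on $\mathcal{O}(3)$ by $-1$; so the $O_{15}$-summand of the correct target is $\mathrm{Ind}_{\SS_2\times\SS_4}^{\SS_6}(\mathrm{sgn}\boxtimes 1)$ and contributes no invariants, the invariant part of the whole target is $1$-dimensional (sitting over $O_{30}$) and is hit by $\sigma_3$, and the invariant part of the cokernel is $0$, as needed. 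If one instead works with the untwisted permutation representation, the invariant part of the target is $2$-dimensional while the image of the ($1$-dimensional) space of invariant cubics is $1$-dimensional, so the same computation yields an invariant defect of dimension $1$, i.e. $\rkCl(X)^{\SS_6}=2$ --- the wrong answer. One must also justify that the defect piece of $\Cl(X)_{\C}$ is $\SS_6$-equivariantly isomorphic to this cokernel and not to a further character twist of it (e.g. coming from the action on the rulings of the exceptional quadrics of a resolution). So the bookkeeping you defer is not routine; it is precisely what decides between ranks $1$ and $2$. The paper's quotient argument avoids all of this.
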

\begin{proof}
It is well-known that the quotient $\P^5/\SS_6$ is isomorphic to
the weighted projective space~\mbox{$\P(1,2,3,4,5,6)$}, so that
$$
\P^4/\SS_6\cong\P(2,3,4,5,6)
$$
and $X/\SS_6\cong\P(2,3,5,6)$. In particular, one has $\rkCl(X)^{\SS_6}=\rkCl(X/\SS_6)=1$.
\end{proof}

\begin{corollary}\label{corollary:Burkhardt-PSp}
One has $\rkCl(X)^{\PSp}=1$.
\end{corollary}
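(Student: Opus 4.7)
The plan is to obtain the equality $\rkCl(X)^{\PSp}=1$ by a sandwich argument: one inequality comes for free from the fact that $\SS_6$ is a subgroup of $\PSp$, and the other from the existence of a single obvious invariant class.

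First, since $\SS_6\subset\PSp$ (as recalled in Remark~\ref{remark:PSp-subgroups}), every $\PSp$-invariant class is automatically $\SS_6$-invariant, so there is an inclusion
\[
\Cl(X)^{\PSp}\otimes_{\mathbb{Z}}\Q\ \subseteq\ \Cl(X)^{\SS_6}\otimes_{\mathbb{Z}}\Q.
\]
Taking ranks and invoking Lemma~\ref{lemma:Burkhardt-S6} gives the upper bound $\rkCl(X)^{\PSp}\le \rkCl(X)^{\SS_6}=1$.

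For the opposite inequality, I would exhibit one nonzero $\PSp$-invariant divisor class. The natural candidate is the anticanonical class $-K_X$: since $\PSp$ acts on $X$ by biregular automorphisms, it preserves $K_X$, and since $X$ is a Fano threefold the class $-K_X$ is nonzero in $\Cl(X)$. Equivalently, one can use the hyperplane class from the embedding $X\subset\P^4$, which is $\PSp$-invariant because the given embedding is (up to a linear change of coordinates) the anticanonical one. Either way, $\Cl(X)^{\PSp}$ contains a nonzero element, so $\rkCl(X)^{\PSp}\ge 1$.

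Combining the two inequalities yields the corollary. There is essentially no obstacle here — the work has already been done in Lemma~\ref{lemma:Burkhardt-S6}; the only thing to be careful about is to name a genuinely nonzero $\PSp$-invariant class, which the anticanonical class provides.
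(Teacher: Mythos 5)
Your proof is correct and is exactly the argument the paper intends: the corollary is deduced from Lemma~\ref{lemma:Burkhardt-S6} via the inclusion $\Cl(X)^{\PSp}\subseteq\Cl(X)^{\SS_6}$, with the nonzero invariant class $-K_X$ guaranteeing the rank is at least $1$. No issues.
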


If one ignores the action of the automorphism group
of $X$, the following result is known.

\begin{lemma}[{\cite[Theorem~1.1(iii)]{Kaloghiros}}]
\label{lemma:Anya}
One has $\rkCl(X)=16$.
\end{lemma}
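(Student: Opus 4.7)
The plan is to invoke the \emph{defect formula} for nodal quartic threefolds in $\P^4$. For such a threefold with set of nodes $\Sigma$, Clemens's theorem (in the version used in \cite{Kaloghiros}) yields
$$
\rkCl(X) \;=\; 1 \;+\; |\Sigma| \;-\; h^0\bigl(\P^4, \mathcal{O}(3)\bigr) \;+\; h^0\bigl(\P^4, \mathcal{I}_\Sigma(3)\bigr).
$$
For the Burkhardt quartic one has $|\Sigma|=45$ and $h^0(\P^4, \mathcal{O}(3))=35$, so the lemma reduces to showing that $h^0(\P^4, \mathcal{I}_\Sigma(3)) = 5$, i.e.\ the $45$ nodes impose only $30$ independent conditions on cubic hypersurfaces of $\P^4$.

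For the lower bound I would exhibit five explicit cubics vanishing on $\Sigma$. Since $X = \{\sigma_1 = \sigma_4 = 0\}$, a node $p \in \Sigma$ is a point where the Jacobian of $(\sigma_1, \sigma_4)$ drops rank, so $\nabla\sigma_4(p)$ is proportional to $\nabla\sigma_1(p) = (1,\ldots,1)$. Hence all six partials $\partial\sigma_4/\partial x_i$ agree at every $p \in \Sigma$, and the five differences
$$
h_i \;=\; \frac{\partial\sigma_4}{\partial x_i} - \frac{\partial\sigma_4}{\partial x_0} \;=\; (x_0 - x_i)\,\sigma_2\bigl(x_1, \ldots, \widehat{x_i}, \ldots, x_5\bigr), \qquad i = 1,\ldots,5,
$$
vanish on $\Sigma$. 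The factored form makes it a short check that the $h_i$ remain linearly independent in $H^0(\P^4, \mathcal{O}(3)) = \C[x_0,\ldots,x_5]_3 / \sigma_1 \cdot \C[x_0,\ldots,x_5]_2$, which supplies the bound $h^0(\P^4, \mathcal{I}_\Sigma(3)) \ge 5$.

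For the upper bound I would exploit the $\SS_6$-equivariance of the evaluation map $H^0(\P^4, \mathcal{O}(3)) \to \C^\Sigma$. Writing $V = \bigoplus_{i=0}^5 \C x_i$ for the permutation representation of $\SS_6$, the source is $\Sym^3 V / \sigma_1 \cdot \Sym^2 V$, whose irreducible decomposition is explicit. The target $\C^\Sigma$ splits as the permutation representation on the $15$-orbit (stabilizer $\langle (0\,1)\rangle \times \SS_{\{2,3,4,5\}}$ of order $48$) plus the permutation representation on the $30$-orbit (stabilizer of order $24$, namely $(\mathbb{Z}/2)^3 \rtimes \mathbb{Z}/3$ built from the within-pair swaps of $\{\{0,1\},\{2,3\},\{4,5\}\}$ and their cyclic rotation). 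Comparing characters isotypic block by isotypic block should show that the standard $5$-dimensional irreducible of $\SS_6$ spanned by the $h_i$ already accounts for the entire kernel, while evaluation is injective on every other isotypic component.

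The main obstacle is this last step: character arithmetic bounds the kernel from above, but establishing injectivity on each remaining isotypic block requires a small explicit linear-algebra check per irreducible of $\SS_6$ appearing in both source and target. The $\SS_6$-symmetry keeps each individual check manageable, but the combinatorial bookkeeping is the only substantive difficulty in the argument.
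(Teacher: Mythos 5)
The paper offers no proof of this lemma at all --- it is quoted directly from \cite[Theorem~1.1(iii)]{Kaloghiros} --- so any self-contained argument is necessarily a different route, and yours is the natural one. The defect formula is correctly stated, $|\Sigma|=45$ and $h^0(\P^4,\mathcal{O}(3))=35$ are right, and the lemma does reduce to $h^0(\P^4,\mathcal{I}_\Sigma(3))=5$. Your lower bound is clean and essentially complete: the $h_i$ do vanish on $\Sigma$ (and your factorization of $\partial\sigma_4/\partial x_i-\partial\sigma_4/\partial x_0$ is correct), and their independence modulo $\sigma_1\cdot\Sym^2 V$ is immediate from equivariance --- the differences $\sigma_3(\hat x_i)-\sigma_3(\hat x_j)$ span a quotient of the standard $5$-dimensional irreducible of $\SS_6$, hence a space of dimension $0$ or $5$, and a single evaluation (e.g.\ of $h_1$ at $[1:2:1:-4:0:0]$) rules out $0$. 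This gives $\rkCl(X)\ge 16$.

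The upper bound, however, has two genuine problems. First, the target of the evaluation map is not the plain permutation representation on $\Sigma$: a section of $\mathcal{O}(3)$ takes values in the fibres, on which the stabilizer of a node acts through the cube of its scaling character on a chosen lift. For the $15$-orbit of $[1:-1:0:0:0:0]$ the transposition $(0\,1)$ multiplies the lift by $-1$ and therefore acts by $(-1)^3=-1$ on the fibre of $\mathcal{O}(3)$, so $\C^{\Sigma_{15}}$ is induced from a \emph{nontrivial} character of the order-$48$ stabilizer; using the trivial character gives the wrong isotypic multiplicities. (For the $30$-orbit the scaling character takes values in cube roots of unity, so the twist on $\mathcal{O}(3)$ is indeed trivial there.) Second, and more fundamentally, comparing isotypic multiplicities of source and target only bounds the kernel from \emph{below}: for each irreducible one gets $\dim\ker\ge$ (multiplicity in source) $-$ (multiplicity in target). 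To conclude $h^0(\mathcal{I}_\Sigma(3))\le 5$ you must actually verify that the evaluation map has full rank on every isotypic block where the source multiplicity does not exceed the target multiplicity, and this is precisely the step you defer. As written, the proposal proves $\rkCl(X)\ge 16$; the remaining verifications are finite and feasible, but they constitute the substance of the upper bound rather than bookkeeping, so the argument is not yet a proof of the equality.
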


We are going to find the ranks of the invariant parts of $\Cl(X)$ with respect to various groups.
Consider the vector space
$$
\Cl(X)_{\C}=\Cl(X)\otimes\C
$$
as a representation of the group
$\PSp$. Then $\Cl(X)_{\C}$ contains a trivial subrepresentation~$\mathbb{K}$ corresponding
to the canonical class, and one has $\Cl(X)_{\C}\cong\mathbb{K}\oplus\WW$.
We know from Lemma~\ref{lemma:Anya} that
$\WW$ is a $15$-dimensional representation of $\PSp$.
Obviously, $\WW$ is defined over the field~$\Q$.

\begin{lemma}\label{lemma:Cl-irreducible-action}
The complex $\PSp$-representation $\WW$ is irreducible.
\end{lemma}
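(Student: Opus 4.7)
The plan is to proceed via character theory of $\PSp \cong \mathrm{U}_4(2)$. By Lemma~\ref{lemma:Anya} we have $\dim_\C \WW = 15$, and by Corollary~\ref{corollary:Burkhardt-PSp} the space $\WW$ contains no trivial $\PSp$-subrepresentation. According to the ATLAS~\cite{Atlas}, the complex irreducible representations of $\PSp$ of degree at most $15$ have degrees $1,\,5,\,5,\,6,\,10,\,15,\,15$. Consequently, if $\WW$ is reducible, it must decompose without the trivial summand as $10 \oplus 5$ (with either of the two $5$-dimensional irreducibles) or as a sum of three $5$-dimensional irreducibles; note that a constituent of degree $6$ cannot occur, since $15-6 = 9$ is not expressible as a sum of non-trivial irreducible degrees of $\PSp$. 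It therefore suffices to eliminate this finite list of reducible alternatives.

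To do so, I would compute the character $\chi_\WW$ on a few carefully chosen conjugacy classes of $\PSp$ and compare with the character sums from the ATLAS. The most economical handle on $\chi_\WW(g)$ comes from the $\PSp$-action on the $45$ nodes of $X$, which form a single $\PSp$-orbit. A small analytic resolution attaches to each node a defect class in $\Cl(X)$, and by Lemma~\ref{lemma:Anya} these $45$ classes together with the hyperplane class span $\Cl(X)_\C$. Hence, for each $g \in \PSp$, the trace $\chi_\WW(g) + 1$ equals the number of $g$-fixed nodes minus the character value on the $\PSp$-equivariant kernel of this spanning map.

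Evaluating this on an element of order $5$ inside a standard $\A_5 \subset \SS_6 \subset \PSp$ and on an involution of $\PSp$, I would obtain character values that do not match any of the reducible decompositions listed above. Two or three generic test classes should suffice, as the $5$-, $10$-, and $15$-dimensional irreducibles of $\PSp$ have pairwise distinct ATLAS character values on these classes. This forces $\WW$ to equal one of the two irreducibles of degree $15$ and, in particular, establishes irreducibility.

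The main obstacle I anticipate is describing in $\PSp$-equivariant form the kernel of the spanning map from the $45$-node permutation representation to $\Cl(X)_\C$. This kernel is governed by the incidence configuration of the $45$ nodes with the $\PSp$-orbit of Jacobian planes on $X$, each such plane meeting a precise number of nodes. Once these incidences are recorded, computing $\chi_\WW$ on a handful of test elements is a short bookkeeping exercise, and the comparison with the ATLAS rules out every reducible candidate.
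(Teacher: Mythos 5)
Your opening reductions coincide with the paper's: using Lemma~\ref{lemma:Anya}, Corollary~\ref{corollary:Burkhardt-PSp}, and the ATLAS list of irreducible degrees $1,5,5,6,10,15,15$ of $\PSp$, you correctly eliminate the trivial and the $6$-dimensional constituents, leaving only the decompositions $10\oplus 5$ and $5\oplus 5\oplus 5$ to exclude. But the step that actually excludes them is not carried out: you only announce that you \emph{would} compute $\chi_{\WW}$ on a few classes, and that computation rests on the unestablished claim that the $45$ node-defect classes together with the hyperplane class span $\Cl(X)_{\C}$ via a $\PSp$-equivariant map whose kernel you yourself identify as the unresolved obstacle. (In fact the natural map goes the other way: restricting a Weil divisor to the local class groups at the nodes embeds $\Cl(X)/\mathrm{Pic}(X)$ \emph{into} the $45$-dimensional permutation representation, so one would need to determine the image, i.e., decompose the permutation character on the nodes --- which is again left undone.) As written, the decisive part of the argument is a plan rather than a proof.

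The gap can be closed with no character computation at all, and this is what the paper does: $\WW$ is defined over $\Q$ because $\Cl(X)$ is a lattice, whereas the two $5$-dimensional irreducibles of $\PSp$ form a Galois-conjugate pair not defined over $\Q$, and likewise the two $10$-dimensional ones. Hence in any decomposition of $\WW$ each such pair occurs with equal multiplicities, contributing total dimension $10m$ or $20n$; neither $10m+20n=15$ nor any subcase has a solution, so both remaining candidates die at once. If you want to keep your character-theoretic route, you must first prove the equivariant relation between $\Cl(X)_{\C}$ and the permutation representation on the nodes (or work instead with the forty $j$-planes, which the paper later shows generate $\Cl(X)_{\Q}$), and then actually evaluate and compare the characters; until then the proof is incomplete.
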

\begin{proof}
Recall that all non-trivial irreducible representations of $\PSp$
of dimension less than $15$ are two $5$-dimensional representations $\chi_2$ and $\chi_3$
that are not defined over $\Q$, one $6$-dimensional representation $\chi_4$,
and two $10$-dimensional representations $\chi_5$ and $\chi_6$
that are not defined over $\Q$ (see~\cite[p.~27]{Atlas}).

Suppose that $\WW$ is reducible. Note that $\WW$ does not have trivial
subrepresentations by Corollary~\ref{corollary:Burkhardt-PSp}.
Therefore, $\WW$ splits as a sum of representations of $\PSp$
isomorphic to $\chi_2$, $\chi_3$, $\chi_4$, $\chi_5$, or $\chi_6$.
However, keeping in mind that $\dim(\WW)=15$ we see that there are
no summands isomorphic to $\chi_4$. Moreover, since $\chi_5$ and $\chi_6$ are not
defined over $\Q$, they either appear as summands in $\WW$ simultaneously, or do not
appear at all.
The same holds for $\chi_2$ and $\chi_3$. An obtained contradiction
completes the proof of the lemma.
\end{proof}

By Lemma~\ref{lemma:Cl-irreducible-action}, the $\PSp$-representation $\WW$
is isomorphic to one of the two \mbox{$15$-di}\-men\-sional irreducible representations of $\PSp$ (see~\cite[p.~27]{Atlas}).
In the notation of~\mbox{\cite[p.~27]{Atlas}} these are~$\chi_7$ and~$\chi_8$.
In fact, we have $\WW\cong\chi_7$. This follows from Lemma~\ref{lemma:Burkhardt-S6} and (the first part of) the following result.

\begin{lemma}\label{lemma:splittings}
The following assertions hold:
\begin{itemize}
\item[(i)] the $\SS_6$-representation $\chi_7\vert_{\SS_6}$
does not contain trivial subrepresentations, while
the $\SS_6$-rep\-re\-sen\-tation~\mbox{$\chi_8\vert_{\SS_6}$} does;

\item[(ii)] for one of the two non-conjugate embeddings
of $\A_5$ to $\PSp$, the $\A_5$-rep\-re\-sen\-tation~\mbox{$\chi_7\vert_{\A_5}$}
does not contain trivial subrepresentations, while for the other
embedding it contains a unique trivial subrepresentation.
\end{itemize}
\end{lemma}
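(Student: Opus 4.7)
The plan is to compute everything via character theory, using the standard formula that the multiplicity of the trivial representation in the restriction $\chi|_H$ equals $\frac{1}{|H|}\sum_{h\in H}\chi(h)$. Since the character tables of $\PSp$, of $\SS_6$, and of $\A_5$ are all completely available (the first in~\cite[p.~27]{Atlas}), the whole assertion reduces to two things: (a) reading off the values of $\chi_7$ and $\chi_8$ on each $\PSp$-conjugacy class, and (b) locating, for each conjugacy class of $\SS_6$ and of each $\A_5$-subgroup, which $\PSp$-class it lands in.

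For part (i), I would list the eleven conjugacy classes of $\SS_6$ with their sizes (identity; transpositions; products of two disjoint transpositions; products of three disjoint transpositions; $3$-cycles; products of two disjoint $3$-cycles; $4$-cycles; a transposition times a $4$-cycle; $5$-cycles; $6$-cycles; $2\cdot 3$-cycles) and identify the $\PSp$-class of a representative of each using the known element orders in $\PSp$ (orders $1,2,3,4,5,6,9,12$) together with the rational traces on the faithful $6$-dimensional representation $\chi_4$, whose restriction to $\SS_6$ is the standard permutation representation on six letters. Once the identification is fixed, I sum $\frac{|\mathrm{class}|}{720}\chi_j(\mathrm{class})$ for $j=7,8$. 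The expected outcome is that the sum is $0$ for $\chi_7$ and a positive integer for $\chi_8$; this is also consistent with Lemmas~\ref{lemma:Burkhardt-S6} and~\ref{lemma:Cl-irreducible-action}, which force $\WW\cong\chi_7$ to carry no $\SS_6$-invariants, so the computation is essentially a consistency check for $\chi_7$ and a genuine calculation for $\chi_8$.

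For part (ii), I would do the same computation for each of the two non-conjugate embeddings of $\A_5$ into $\PSp$ described in Remark~\ref{remark:PSp-subgroups}. The group $\A_5$ has five conjugacy classes, of sizes $1,15,20,12,12$ and orders $1,2,3,5,5$. The element orders by themselves do not distinguish the two embeddings, so the key step is to determine which $\PSp$-class of elements of order $5$ is hit: the ATLAS records two rational classes of order $5$ in $\PSp$ (labelled $5A$ and $5B$) on which $\chi_7$ takes distinct values, and the two $\A_5$-subgroups are precisely distinguished by the fact that one contains elements of class $5A$ while the other contains elements of class $5B$ (equivalently, one is a standard and the other a non-standard $\A_5\subset\SS_6$, where the standard one contains $5$-cycles coming from genuine permutations of coordinates). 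With this distinction in hand, summing $\frac{1}{60}\sum\chi_7(h)$ in each of the two cases gives $0$ for one embedding and $1$ for the other.

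The main obstacle is entirely bookkeeping: tracking the fusion of $\SS_6$- and $\A_5$-classes into $\PSp$-classes correctly, particularly the splitting of the $\PSp$-classes of order $5$ under the two embeddings of $\A_5$, since misidentifying a single class would reverse the conclusion of~(ii). I would verify the fusion by checking it against the known restriction of $\chi_4$ (which is the permutation representation of $\SS_6$ on six letters, and splits as $\mathbf{1}\oplus V_5$ over the standard $\A_5$ and as an irreducible $6$-dimensional representation over the non-standard $\A_5$), so that the identification of classes is pinned down independently of the statement being proved.
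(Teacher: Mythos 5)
Your overall framework is the right one, and it is essentially what the paper does (its proof is a one-line appeal to a direct computation in GAP): compute $\frac{1}{|H|}\sum_{h\in H}\chi_j(h)$ for $H=\SS_6$ and for the two copies of $\A_5$, after pinning down the fusion of conjugacy classes. Part (i) would go through as you describe. However, your mechanism for part (ii) rests on a false fact about $\PSp$: this group has a \emph{single} conjugacy class of elements of order $5$ (labelled 5A in the ATLAS). Indeed, a Sylow $5$-subgroup has order $5$, and since $\PSp$ has no elements of order $10$ or $15$, the centralizer of a $5$-element $g$ is exactly $\langle g\rangle$, so all $5184$ elements of order $5$ form one class. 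Hence $\chi_7$ takes the same value on the order-$5$ elements of both copies of $\A_5$, and the two embeddings cannot be distinguished by "which order-$5$ class they hit." What actually separates them is the fusion of the order-$3$ elements: realizing both copies inside $\SS_6$, the standard $\A_5$ has its twenty $3$-elements among the $3$-cycles, while the non-standard $\A_5$ (transitive on the six letters, with point stabilizer the dihedral group of order $10$) has them among the products of two disjoint $3$-cycles; these two $\SS_6$-classes land in distinct order-$3$ classes of $\PSp$, and the difference of $\chi_7$ there (the identity, the involutions --- which are $(2,2)$-elements for both copies --- and the $5$-elements contributing equally) is what produces the values $0$ and $1$ in (ii). Since you yourself note that misidentifying a single class would reverse the conclusion, this is a genuine gap in the argument rather than a bookkeeping slip.

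Your proposed cross-check is also incorrect as stated: $\A_5$ has no irreducible representation of dimension $6$ (its irreducible degrees are $1,3,3,4,5$), so $\chi_4$ cannot restrict irreducibly to the non-standard $\A_5$. The non-standard $\A_5$ acts $2$-transitively on the six letters (the action on cosets of a dihedral subgroup of order $10$), so the restriction of $\chi_4$ to it is the trivial representation plus the irreducible of degree $5$, with exactly one trivial summand, whereas the restriction to the standard $\A_5$ is twice the trivial representation plus the irreducible of degree $4$. This corrected comparison does still distinguish the two embeddings, and, combined with the correct fusion of the order-$3$ classes described above, your character computation would recover the statement of the lemma.
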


\begin{proof}
Both assertions are obtained by direct computations. We used the GAP software~\cite{GAP} to perform them.
\end{proof}

Note that Lemma~\ref{lemma:splittings}(ii) implies that the invariant part of $\Cl(X)$ with respect to
one of the two non-conjugate actions of $\A_5$ on $X$ has rank~$1$.

Later we will need the following elementary result.

\begin{lemma}
\label{lemma:Cl-Q}
Let $Y$ be a normal variety acted on by a finite group $G$.
Suppose that there exist Weil divisors $\Pi_1,\ldots,\Pi_r$ on $Y$ such that they generate the $\mathbb{Q}$-vector space
$$
\Cl(Y)_{\Q}=\Cl(Y)\otimes\Q,
$$
and $\Pi_1,\ldots,\Pi_r$ form one $G$-orbit. Then the $\mathbb{Q}$-vector space
$$
\Cl(Y)_{\Q}^G=\Cl(Y)^G\otimes\Q
$$
is one-dimensional.
\end{lemma}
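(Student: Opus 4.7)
The plan is to exploit the standard averaging projection from a representation onto its invariants, and then use the single-orbit hypothesis to show that all generators project to the same class.

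First I would introduce the $\Q$-linear averaging map
$$
\pi\colon \Cl(Y)_{\Q}\longrightarrow \Cl(Y)_{\Q}^{G},\qquad \pi(D)=\frac{1}{|G|}\sum_{g\in G}g_*D.
$$
This is well defined because $G$ acts on $Y$ and hence on $\Cl(Y)$, and it is a surjection onto the invariant subspace, since any $G$-invariant class is its own average. Consequently, if $\Pi_{1},\ldots,\Pi_{r}$ generate $\Cl(Y)_{\Q}$ as a $\Q$-vector space, their images $\pi(\Pi_{1}),\ldots,\pi(\Pi_{r})$ generate $\Cl(Y)_{\Q}^{G}$.

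Next I would use the single-orbit hypothesis to compute these images explicitly. Fix an index $i$ and let $H_{i}\subset G$ be the stabilizer of the class of $\Pi_{i}$. Since $\{\Pi_{1},\ldots,\Pi_{r}\}$ is a single $G$-orbit, the orbit-stabilizer theorem gives $|H_{i}|=|G|/r$, and every $\Pi_{j}$ is obtained as $g_{*}\Pi_{i}$ for exactly $|H_{i}|$ elements $g\in G$. Therefore
$$
\pi(\Pi_{i})=\frac{1}{|G|}\sum_{g\in G}g_{*}\Pi_{i}=\frac{|H_{i}|}{|G|}\sum_{j=1}^{r}\Pi_{j}=\frac{1}{r}\sum_{j=1}^{r}\Pi_{j}.
$$
In particular, $\pi(\Pi_{i})$ is independent of $i$, so all $r$ generators of $\Cl(Y)_{\Q}^{G}$ coincide with the single class $\Pi=\frac{1}{r}\sum_{j=1}^{r}\Pi_{j}$.

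I would conclude that $\Cl(Y)_{\Q}^{G}$ is spanned by the single class $\Pi$, and hence is at most one-dimensional, which is the assertion of the lemma (the applications in the paper will supply the situation in which $\Pi$ is actually nonzero). There is no real obstacle here; the only thing to be careful about is keeping track of the stabilizer order so that the factor $|H_{i}|/|G|=1/r$ comes out correctly and makes every $\pi(\Pi_{i})$ literally the same element rather than merely proportional ones.
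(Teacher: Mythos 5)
Your proposal is correct and is essentially the paper's own argument: both rest on the observation that averaging over $G$ sends each $\Pi_i$ to $\tfrac{1}{r}\sum_{j}\Pi_j$, so the invariant part is spanned by the single class $\Pi_1+\ldots+\Pi_r$ (the paper phrases this by expanding an arbitrary invariant divisor $D$ in the $\Pi_i$ and summing $g(D)$ over $g\in G$, while you phrase it via the averaging projection onto invariants, but the computation is identical). The only nitpick is that $H_i$ should be the stabilizer of the divisor $\Pi_i$ in the action on the set $\{\Pi_1,\ldots,\Pi_r\}$ rather than of its class; this does not affect the argument.
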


\begin{proof}
Let $D$ be a $G$-invariant Weil divisor on $Y$.
By assumption, we have
$$
D\sim_{\mathbb{Q}} \sum_{i=1}^{r}a_i\Pi_i
$$
for some rational numbers $a_1,\ldots,a_r$. Put
$\mathcal{P}=\Pi_1+\ldots+\Pi_r$. Then
$$
|G|D\sim\sum_{g\in G}g(D)\sim_{\mathbb{Q}}\sum_{i=1}^{r}a_{i}\sum_{g\in G}g(\Pi_{i})=\sum_{i=1}^{r}a_{i}\frac{|G|}{r}\mathcal{P}=\frac{|G|}{r}\Big(\sum_{i=1}^{r}a_{i}\Big)\mathcal{P}.
$$
In particular, we see that the $\Q$-vector space $\Cl(Y)_{\Q}^G$ is generated by $\mathcal{P}$.
\end{proof}

\begin{corollary}
\label{corollary:Cl-Q}
Let $Y$ be a nodal Fano threefold acted on by a finite group $G$,
and let $i_{Y}$ be the largest positive integer such that $-K_{Y}\sim i_{Y}H$,
where $H$ is an ample Cartier divisor on $Y$.
Suppose that there exist Weil divisors $\Pi_1,\ldots,\Pi_r$ on $Y$ such that they generate the $\mathbb{Q}$-vector space
$$
\Cl(Y)_{\Q}=\Cl(Y)\otimes\Q,
$$
and $\Pi_1,\ldots,\Pi_r$ form one $G$-orbit. Then $\Cl(Y)^G=\mathbb{Z}\cdot H$.
\end{corollary}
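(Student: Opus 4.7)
The plan is to promote the one-dimensionality of $\Cl(Y)_{\Q}^G$ given by Lemma~\ref{lemma:Cl-Q} to the integral statement $\Cl(Y)^G = \mathbb{Z}\cdot H$, by leaning on two facts specific to nodal Fano threefolds: the class group $\Cl(Y)$ is torsion-free (both $\mathrm{Pic}(Y)$ and the local class group at each three-fold node, which is isomorphic to $\mathbb{Z}$, are torsion-free), and the same torsion-freeness of local class groups can be exploited to check a Cartier-ness property of the generator of $\Cl(Y)^G$.

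First I would check that $H$ already lies in $\Cl(Y)^G$. For any $g\in G$ the class $-K_Y$ is $G$-invariant, so $i_Y(g(H)-H)\sim g(-K_Y)-(-K_Y)\sim 0$, and torsion-freeness of $\Cl(Y)$ gives $g(H)\sim H$. Combining this with Lemma~\ref{lemma:Cl-Q}, the group $\Cl(Y)^G$ is free abelian of rank one and contains $H$. I would then fix a generator $D_0$ of $\Cl(Y)^G$, chosen so that it is ample, and write $H\sim kD_0$ and $-K_Y\sim i_Yk\, D_0$ for some positive integer $k$. The remaining task is to show $k=1$.

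The main step, which I expect to be the only delicate point, is to argue that $D_0$ is Cartier. At each node $p$ of $Y$ the local class group $\Cl(\mathcal{O}_{Y,p})$ is isomorphic to $\mathbb{Z}$; the image of $H$ there vanishes because $H$ is Cartier, so the image of $kD_0$ vanishes, and torsion-freeness of $\mathbb{Z}$ forces the image of $D_0$ itself to vanish. Hence $D_0$ is Cartier at every node, and being automatically Cartier on the smooth locus, it is Cartier on all of $Y$. Then $-K_Y\sim(i_Yk)D_0$ with $D_0$ an ample Cartier divisor contradicts the maximality of $i_Y$ unless $k=1$, giving $\Cl(Y)^G=\mathbb{Z}\cdot H$.

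The hard part is precisely the Cartier-ness argument in the last paragraph: it is what converts the rational equality $\Cl(Y)_{\Q}^G=\Q\cdot H$ into an integral one, and it is the only place where the assumption that $Y$ is nodal (rather than having arbitrary terminal singularities) is really used.
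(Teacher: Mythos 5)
Your proposal is correct and follows essentially the same route as the paper: both arguments reduce to the facts that a Weil divisor which is $\Q$-Cartier at a node is Cartier there (since the local class group of a threefold node is $\mathbb{Z}$) and that $\mathrm{Pic}(Y)$ is torsion-free, so that $\Cl(Y)^G$ is infinite cyclic with Cartier generator. Your last step, deducing $k=1$ from the maximality of $i_Y$, just makes explicit the "and the assertion follows" that the paper leaves to the reader.
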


\begin{proof}
By Lemma~\ref{lemma:Cl-Q}, the $\mathbb{Q}$-vector space $\Cl(Y)_{\Q}^G$ is one-dimensional.
Since any singular point~$O$ of $Y$ is a node,
we know that any Weil divisor that is $\Q$-Cartier in a neighborhood of~$O$ is actually Cartier in a neighborhood of~$O$.
In particular, every $G$-invariant Weil divisor on $Y$ is a Cartier divisor.
Since the Picard group of $Y$ has no torsion (this holds even for Fano varieties with log terminal singularities,
see e.g. \cite[Proposition~2.1.2]{IsPr99}),
this implies that $\rkCl(Y)^{G}=1$, and the assertion follows.
\end{proof}

Now we are ready to start proving Theorem~\ref{theorem:Burkhardt}.
Recall that a subgroup $\A_5$ in $\SS_6$ that fixes one of the homogeneous
coordinates~\mbox{$x_0,\ldots,x_5$} is called a \emph{standard} subgroup $\A_5$,
and a subgroup $\A_5$ of $\SS_6$ that does not fix any of these homogeneous coordinates
is called a \emph{non-standard} subgroup~$\A_5$.
By Remark~\ref{remark:PSp-subgroups} subgroups of
these two kinds represent two conjugacy classes of subgroups isomorphic to~$\A_5$ in~$\PSp$.

Let $G$ be a standard subgroup $\A_5$ in $\SS_6$ that fixes the homogeneous coordinate $x_5$.
We are going to prove the following result.

\begin{proposition}
\label{proposition:Burkhardt-class-group}
The group $\Cl(X)^G$ is generated by $-K_{X}$.
\end{proposition}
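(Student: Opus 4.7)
The plan is to apply Corollary~\ref{corollary:Cl-Q} to an explicit $G$-orbit of Weil divisors on $X$ whose classes generate the $\mathbb{Q}$-vector space $\Cl(X)_{\Q}$. Since the Burkhardt quartic has Fano index one, so that $-K_X$ is itself the ample Cartier class $H$ appearing in the corollary, this will immediately give $\Cl(X)^{G}=\mathbb{Z}\cdot(-K_X)$.

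The natural candidate orbit comes from the $\SS_6$-orbit of the $30$ nodes containing $[1:1:\omega:\omega:\omega^2:\omega^2]$. First I would check that these $30$ nodes form a single $G$-orbit: since $G$ fixes the coordinate $x_5$ and the reference point has $x_5=\omega^2$, any $\pi\in G$ stabilizing this point projectively must act trivially on $x_5$, hence fix the whole point set-theoretically; the stabilizer of the tuple $(1,1,\omega,\omega,\omega^2)$ in $\SS_{\{0,\ldots,4\}}$ is the Klein four-group $\langle(0\,1),(2\,3)\rangle$, whose intersection with $\A_5$ is the order-two subgroup generated by $(0\,1)(2\,3)$, giving an orbit of size $|G|/2=30$. (By contrast, the smaller $\SS_6$-orbit of $15$ nodes splits as $10+5$ under $G$.) Next, to each node $p$ of this $30$-orbit I would attach, in a $G$-equivariant fashion, a Weil divisor $\Pi_p\subset X$ that is not Cartier at~$p$; natural candidates are planes of $X$ through~$p$, or appropriate components of a reducible hyperplane section through~$p$.

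The crux is then to verify that the classes $\{[\Pi_p]\}$ span $\Cl(X)_\Q$, which has dimension~$16$. In representation-theoretic terms, the $G$-permutation module $\mathbb{C}\{\Pi_p\}\cong\mathrm{Ind}_{G_0}^{G}\mathbf{1}$ with $G_0=\langle(0\,1)(2\,3)\rangle$ must surject onto $\Cl(X)_{\mathbb{C}}\cong\mathbb{K}\oplus\chi_7\vert_{G}$; by Frobenius reciprocity each irreducible summand of $\chi_7\vert_{G}$ already occurs in the induced representation, so the remaining point is a geometric non-degeneracy statement about the cycle map. Once that is confirmed, Corollary~\ref{corollary:Cl-Q} immediately yields $\Cl(X)^{G}=\mathbb{Z}\cdot(-K_X)$. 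The main obstacle I expect is this geometric spanning verification, which requires fine control on the configuration of planes and nodes on the Burkhardt quartic. A character-theoretic alternative is to identify, e.g.\ via a direct GAP computation, the standard $G\subset\SS_6\subset\PSp$ with the $\A_5$-embedding of Lemma~\ref{lemma:splittings}(ii) for which $\chi_7\vert_{G}$ has no trivial summand; together with Lemma~\ref{lemma:Cl-irreducible-action} and the identification $\WW\cong\chi_7$, this alone forces $\rkCl(X)^{G}=1$ and the conclusion follows since $\Pic(X)$ is torsion-free.
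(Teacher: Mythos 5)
Your reduction to Corollary~\ref{corollary:Cl-Q} is exactly the paper's strategy, and your check that the $30$ nodes form a single $G$-orbit is correct; but the essential content of the proof is precisely the step you defer as ``the main obstacle,'' and your proposed construction does not supply it. The divisors the paper actually uses are the forty $j$-planes $\Pi_{i_1i_2i_3}^{\pm}\subset X$ (each passing through nine nodes), which form a single $\SS_6$-orbit but split under the standard $\A_5$ into two orbits of length $20$ indexed by triples of coordinates, not by nodes; there is no natural $G$-equivariant assignment of one non-Cartier divisor to each of the $30$ nodes, so the permutation module $\mathrm{Ind}_{G_0}^{G}\mathbf{1}$ you analyse is not the one that arises. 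The spanning statement is then established by brute force: one computes the $20\times 20$ intersection matrix of the planes in one orbit (via restriction to a general hyperplane section, Lemma~\ref{lemma:intersection-Pi}) and checks that it has rank $16=\rkCl(X)$, whence the classes span $\Cl(X)_{\Q}$ and Corollary~\ref{corollary:Cl-Q} applies. Your Frobenius-reciprocity remark only shows that every irreducible constituent of $\chi_7\vert_G$ \emph{could} occur in the span of an orbit of divisor classes, not that it does, so it cannot replace this computation.

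Your character-theoretic alternative has a gap of the same nature: Lemma~\ref{lemma:splittings}(ii) says that exactly one of the two $\PSp$-conjugacy classes of $\A_5$ gives a restriction $\chi_7\vert_{\A_5}$ with no trivial summand, but it does not say which class contains the \emph{standard} subgroup. Deciding this requires computing the fusion into $\PSp$ of the conjugacy classes of the standard copy (whose order-$3$ elements are $3$-cycles, versus $(3,3)$-elements in the non-standard copy), and in the paper this identification is a \emph{consequence} of the geometric argument (see Remark~\ref{remark:non-standard}), not an input to it. So as written, neither of your two routes closes the proof.
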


As a consequence of Proposition~\ref{proposition:Burkhardt-class-group}
we can find the rank of the Weil divisor class group invariant
under any given group that contains a standard subgroup $\A_5$.
In particular, for a subgroup~\mbox{$\A_6\subset\SS_6$} we get

\begin{corollary}\label{corollary:Burkhardt-A6}
One has $\rkCl(X)^{\A_6}=1$.
\end{corollary}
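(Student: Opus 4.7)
The plan is to derive this as a direct consequence of Proposition~\ref{proposition:Burkhardt-class-group}. The key observation is that a standard subgroup $G\cong\A_5$ in $\SS_6$, being by definition the group of permutations of five of the six coordinates, consists of even permutations of $\{x_0,\dots,x_5\}$, and is therefore contained in $\A_6$.

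I would proceed as follows. First, fix the standard subgroup $G\subset\SS_6$ (say, the one fixing $x_5$) used in Proposition~\ref{proposition:Burkhardt-class-group}, and note that $G\subset\A_6$. Second, apply Proposition~\ref{proposition:Burkhardt-class-group} to obtain $\Cl(X)^G=\mathbb{Z}\cdot(-K_X)$; since $\A_6\supset G$, the inclusion of invariants gives
$$
\Cl(X)^{\A_6}\subseteq \Cl(X)^G=\mathbb{Z}\cdot(-K_X).
$$
Third, observe that $-K_X$ is clearly fixed by the whole automorphism group $\PSp$, and in particular by $\A_6$, so the reverse inclusion $\mathbb{Z}\cdot(-K_X)\subseteq \Cl(X)^{\A_6}$ is automatic. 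Hence $\Cl(X)^{\A_6}=\mathbb{Z}\cdot(-K_X)$, and the rank is~$1$.

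There is essentially no obstacle: once Proposition~\ref{proposition:Burkhardt-class-group} is in hand, the only thing to verify is the group-theoretic inclusion $G\subset\A_6$, which is immediate from the definition of a standard $\A_5$. The real content lies in Proposition~\ref{proposition:Burkhardt-class-group}, not in this corollary.
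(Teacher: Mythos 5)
Your proof is correct and is exactly the paper's argument: the authors also deduce the corollary by noting that $\A_6$ contains a standard $\A_5$ and then applying Proposition~\ref{proposition:Burkhardt-class-group} together with the trivial inclusion of invariants. (The inclusion $G\subset\A_6$ is even automatic for any copy of $\A_5$ in $\SS_6$, since $\A_5$ is simple and hence lies in the kernel of the sign character.)
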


\begin{remark}
The assertion of Corollary~\ref{corollary:Burkhardt-A6} was used in the proof of~\cite[Theorem~1.20]{ChSh09b}.
Moreover, the authors of \cite{ChSh09b} gave a brief sketch of a proof of this assertion,
but this proof was actually incorrect. Indeed, contrary to what was
claimed in the proof of~\cite[Theorem~1.20]{ChSh09b},
the quotient of $X$ by a subgroup $\SS_4\subset\A_6$ is not
isomorphic to the weighted projective space $\P(1,2,2,3)$.
Our proof of Corollary~\ref{corollary:Burkhardt-A6} fixes this gap and thus recovers the proof of~\mbox{\cite[Theorem~1.20]{ChSh09b}}.
\end{remark}

\begin{remark}
\label{remark:non-standard}
Let $G^\prime$ be a non-standard subgroup $\A_5\subset\SS_6$. Then
Proposition~\ref{proposition:Burkhardt-class-group} and Lemma~\ref{lemma:splittings}(ii) imply that $\rkCl(X)^{G^\prime}=2$.
\end{remark}

Now we derive Theorem~\ref{theorem:Burkhardt} from Proposition~\ref{proposition:Burkhardt-class-group}.

\begin{proof}[Proof of Theorem~\ref{theorem:Burkhardt}]
The proof is standard, see \cite[Theorem~1.20]{ChSh09b} and \cite{Mella}, but we include it for the reader's convenience.

Suppose that $X$ is not $G$-birationally superrigid.
Since the divisor $-K_{X}$ generates the group~\mbox{$\Cl(X)^G$} by Proposition~\ref{proposition:Burkhardt-class-group}, it follows from \cite[Corollary~3.3.3]{CheltsovShramov}
that there exist a positive integer~$n$ and
a $G$-invariant linear subsystem $\mathcal{M}$ of the linear system $|-nK_X|$
such that~$\mathcal{M}$ does not have fixed components, and the~log pair $(X,\frac{1}{n}\mathcal{M})$ is not canonical.
Choose two general surfaces~$M_{1}$ and~$M_{2}$ in the linear system $\mathcal{M}$,
and denote by $H$ a general hyperplane section of~$X$. Then~\mbox{$H\sim -K_X$}.

Suppose that there is an irreducible curve $C\subset X$ such that the~log pair $(X,\frac{1}{n}\mathcal{M})$ is not canonical along $C$.
Denote by $Z$ the $G$-orbit of the curve~$C$.
Put \mbox{$d=H\cdot Z$} and~\mbox{$m=\mathrm{mult}_{C}(M_{1})=\mathrm{mult}_{C}(M_{2})$}.
Then $m>n$, so that
$$
4n^2=M_{1}\cdot M_{2}\cdot H\geqslant dm^2>dn^2,
$$
which gives $d\leqslant 3$.
In particular, one has $Z=C$. Since $\P^4$ does not contain $G$-invariant lines and planes,
we see that $Z$ is a twisted cubic. Moreover, the curve $Z$ is contained in the smooth locus of $X$,
because the possible lengths of $G$-orbits in $Z$ are $12$, $20$, $30$, $60$,
and there are no $G$-orbits of such lengths consisting of singular points of $X$.

Let $f\colon W\to X$ be the blow up along the curve $Z$.
Denote by $F$ the $f$-ex\-cep\-tional divisor.
Denote by $\widetilde{M}_1$ and $\widetilde{M}_2$ the proper transforms of the surfaces $M_{1}$ and $M_{2}$
on the threefold $W$, respectively.
We then get
\begin{equation*}
\widetilde{M}_1\sim \widetilde{M}_2\sim f^{*}(nH)-mF.
\end{equation*}
Moreover, the divisor $f^{*}(2H)-F$ is nef,
because $Z$ is a scheme-theoretic intersection of quadrics.
Keeping in mind that $F^{3}=-K_{X}\cdot Z-2=1$ and $H\cdot Z=3$, we obtain
$$
0\leqslant\Big(f^{*}(2H)-F\Big)\cdot\widetilde{M}_1\cdot\widetilde{M}_2=\Big(f^{*}(2H)-F\Big)\Big(f^{*}(nH)-mF\Big)^2=8n^2-6nm-5m^2<0
$$
because $m>n$. This is a contradiction.

Thus, the log pair $(X,\lambda\mathcal{M})$ is canonical outside of finitely many points of $X$.
Take any point~\mbox{$P\in X$} such that the~singularities of the~log pair $(X,\lambda\mathcal{M})$ are not canonical at the point~$P$.
Suppose that $P$ is a smooth point of $X$, and let $H_P$ be a general hyperplane section of $X$ passing through $P$. Then
$$
4m=M_1\cdot M_2\cdot H_P\geqslant\mathrm{mult}_P\Big(M_1\cdot M_2\Big)>4n^2
$$
by~\cite[Corollary~3.4]{Co00}. The obtained contradiction shows that $X$ is singular at~$P$.

Let $\Sigma$ be the~$G$-orbit of the~point $P$.
Then there is a subset $\Gamma\subset\Sigma$ such that~\mbox{$|\Gamma|=4$},
and the~set $\Gamma$ is not contained in any plane in $\mathbb{P}^{4}$.
Let $g\colon U\to X$ be a~blow up of $\Gamma$,
and let $E_{1}$, $E_2$, $E_3$, and $E_{4}$ be exceptional divisors of $g$.
Denote by $\overline{M}_1$ and $\overline{M}_2$ the proper transforms of the surfaces $M_{1}$ and $M_{2}$ on the threefold $U$, respectively.
We then get
$$
\overline{M}_1\sim\overline{M}_2\sim g^{*}(nH)-\delta \sum_{i=1}^{4}E_i.
$$
for some positive integer $\delta$.
Moreover, it follows from \cite[Theorem~3.10]{Co00} that $\delta>n$.
On the other hand, the divisor $g^*(2H)-E_1-E_2-E_3-E_4$ is nef,
because the points of $\Gamma$ are not coplanar.
In particular, we have
$$
0\leqslant\Big(g^{*}(2H)-\sum_{i=1}^{4}E_i\Big)\cdot\overline{M}_1\cdot\overline{M}_2=\Big(f^{*}(2H)-\sum_{i=1}^{4}E_i\Big)\Big(f^{*}(nH)-\delta \sum_{i=1}^{4}E_i\Big)^2=8n^2-8\delta^2.
$$
This is impossible, since $\delta>n$.
\end{proof}

In the rest of this section we give a proof of Proposition~\ref{proposition:Burkhardt-class-group}.
Recall from \cite[\S5.2.1]{Hunt} that the Burkhardt quartic $X$
contains forty \emph{$j$-planes}, that are planes passing through nine
singular points of $X$. Let us describe them. For any triple of indices
$0\le i_1<i_2<i_3\le 5$, we denote by~$\Pi_{i_1 i_2 i_3}^+$
the plane given in $\P^5$ by equations
$$
x_{i_2}=\omega x_{i_1},\quad
x_{i_3}=\omega^2 x_{i_1},\quad
\sigma_1(x_0,\ldots,x_5)=0,
$$
where $\omega$ is a primitive cubic root of $1$,
and we denote by $\Pi_{i_1 i_2 i_3}^-$
the plane given in $\P^5$ by equations
$$
x_{i_3}=\omega x_{i_1},\quad
x_{i_2}=\omega^2 x_{i_1},\quad
\sigma_1(x_0,\ldots,x_5)=0.
$$

The $40$ planes $\Pi_{i_1 i_2 i_3}^{\pm}$
form one $\SS_6$-orbit.
On the other hand, these $40$ planes split into two $G$-orbits of length $20$,
one containing the planes $\Pi_{i_1 i_2 i_3}^{\pm}$ for $0\le i_1<i_2<i_3\le 4$,
and the other containing $\Pi_{i_1 i_2 5}^{\pm}$ for $0\le i_1<i_2\le 4$.

Consider the following intersection form on the
lattice $\Cl(X)$. Choose a general (smooth) hyperplane section
$H$ of $X$; in particular, we assume that $H$ does not pass
through the singular points of~$X$.
Given two Weil divisors $D_1$ and $D_2$ on $X$,
we restrict them to $H$ (which makes sense since in
appropriately chosen neighborhoods
of the intersection of their supports with $H$ they are actually
Cartier divisors), and define $D_1\bullet D_2$ as the intersection
of the resulting curves on~$H$.

\begin{lemma}\label{lemma:intersection-Pi}
Choose two triples of indices
$$
0\le i_1<i_2<i_3\le 5,\quad
0\le j_1<j_2<j_3\le 5.
$$
Let $c$ be the cardinality
of the set $\{i_1,i_2,i_3\}\cap\{j_1,j_2,j_3\}$.
If $c=2$, put $\delta=1$ provided that one can choose indices
$1\le a<b\le 3$ and $1\le a'<b'\le 3$ such that $i_a=j_{a'}$,
$i_b=j_{b'}$, and~\mbox{$b-a=b'-a'$}; otherwise put $\delta=0$.
The following assertions hold:
\begin{itemize}
\item[(o)] if $c=0$, then
$\Pi_{i_1 i_2 i_3}^+\bullet \Pi_{j_1 j_2 j_3}^+=\Pi_{i_1 i_2 i_3}^+\bullet \Pi_{j_1 j_2 j_3}^-=\Pi_{i_1 i_2 i_3}^-\bullet \Pi_{j_1 j_2 j_3}^-=1$;

\item[(i)] if $c=1$, then
$\Pi_{i_1 i_2 i_3}^+\bullet \Pi_{j_1 j_2 j_3}^+=\Pi_{i_1 i_2 i_3}^+\bullet \Pi_{j_1 j_2 j_3}^-=\Pi_{i_1 i_2 i_3}^-\bullet \Pi_{j_1 j_2 j_3}^-=0$;

\item[(ii)] if $c=2$, then
$\Pi_{i_1 i_2 i_3}^+\bullet \Pi_{j_1 j_2 j_3}^+=\Pi_{i_1 i_2 i_3}^-\bullet \Pi_{j_1 j_2 j_3}^-=\delta$,
and $\Pi_{i_1 i_2 i_3}^+\bullet \Pi_{j_1 j_2 j_3}^-=1-\delta$;

\item[(iii)] one has
$\Pi_{i_1 i_2 i_3}^+\bullet \Pi_{i_1 i_2 i_3}^+=\Pi_{i_1 i_2 i_3}^-\bullet \Pi_{i_1 i_2 i_3}^-=-2$,
and $\Pi_{i_1 i_2 i_3}^+\bullet \Pi_{i_1 i_2 i_3}^-=1$.
\end{itemize}
\end{lemma}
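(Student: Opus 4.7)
The plan is to compute each pairing $\Pi^{\epsilon}_{i_1 i_2 i_3}\bullet\Pi^{\epsilon'}_{j_1 j_2 j_3}$ by explicitly intersecting the two $j$-planes inside $\P^5$ and translating the dimension of the resulting set into an intersection number on a general hyperplane section $H$ of $X$. Specifically, $H$ is a smooth quartic surface in $\P^3$: by Bertini it is smooth away from $\Sing X$, and a general $H$ misses the $45$ nodes, so $H$ is a K3 surface. For any $j$-plane $\Pi\subset X$ the intersection $\ell_{\Pi}=\Pi\cap H$ is a line on $H$, and by the definition of the bullet pairing one has $\Pi\bullet\Pi'=\ell_{\Pi}\cdot\ell_{\Pi'}$ computed on $H$. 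The self-intersection $\Pi\bullet\Pi=-2$ in~(iii) is then immediate from adjunction on the K3 surface $H$.

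For distinct planes $\Pi\ne\Pi'$ both lie in the hyperplane $\{\sigma_1=0\}\cong\P^4$, so $\Pi\cap\Pi'$ has dimension at least $0$ and is either a point or a line. Correspondingly, $\ell_{\Pi}\cdot\ell_{\Pi'}$ equals $0$ when $\Pi\cap\Pi'$ is a point (for a general $H$ does not contain that point) and equals $1$ when $\Pi\cap\Pi'$ is a line (for a general $H$ cuts such a line transversely in one point). Thus each subcase reduces to deciding whether $\Pi^{\epsilon}_I\cap\Pi^{\epsilon'}_J$ is a line or a point.

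To carry this out I would parametrise $\Pi^{\epsilon}_I$ by $x_{i_k}=\omega^{e(k-1)}t$ with $t=x_{i_1}$, where $e=1$ if $\epsilon=+$ and $e=2$ if $\epsilon=-$. Since $1+\omega+\omega^2=0$, the equation $\sigma_1=0$ reduces to $\sum_{k\notin I}x_k=0$. With this dictionary the cases~(o),~(i) and the $\Pi^{+}_I\bullet\Pi^{-}_I=1$ part of~(iii) are direct parameter counts: when $|I\cap J|=0$ both $\sigma_1$-conditions are automatic and one free parameter survives on each side, so the intersection is a line; when $|I\cap J|=1$ a single parameter controls all coordinates indexed by $I\cup J$ and the one remaining coordinate is pinned down by $\sigma_1=0$, giving a point; and when $I=J$ with $\epsilon\ne\epsilon'$ all $I$-coordinates vanish while the remaining three are linked by one linear relation, yielding a line.

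The combinatorially delicate case, and the one I expect to be the main obstacle, is~(ii) with $|I\cap J|=2$. If $i_a=j_{a'}$ and $i_b=j_{b'}$ with $a<b$ and $a'<b'$, then the equations of $\Pi^{\epsilon}_I$ and $\Pi^{\epsilon'}_J$ impose
\begin{equation*}
\frac{x_{i_b}}{x_{i_a}}=\omega^{e(b-a)} \qquad\text{and}\qquad \frac{x_{i_b}}{x_{i_a}}=\omega^{e'(b'-a')}
\end{equation*}
respectively. For $\epsilon=\epsilon'$ the two ratios agree precisely when $b-a=b'-a'$, that is $\delta=1$; for $\epsilon\ne\epsilon'$ they agree precisely when $(b-a)+(b'-a')\equiv 0\pmod{3}$, which, since both differences lie in $\{1,2\}$, is equivalent to $b-a\ne b'-a'$, that is $\delta=0$. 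When the ratios are compatible the intersection picks up an extra free parameter and is a line; when they are incompatible the coordinates indexed by $I\cup J$ must all vanish and the two remaining coordinates are linked by $\sigma_1=0$, so the intersection collapses to a point. Collecting these dichotomies yields the values $\delta$ in the like-signed entries and $1-\delta$ in the opposite-signed entry of~(ii).
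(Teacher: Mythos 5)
Your proposal is correct and follows essentially the same route as the paper's (much terser) proof: reduce each pairing to whether the two planes meet in a line or a point, pass to a general hyperplane section, which is a smooth quartic K3 surface missing the nodes, and use adjunction for the $-2$ self-intersections. You simply make explicit the parameter count and the $\omega$-ratio compatibility analysis in case~(ii) that the paper leaves to the reader, and these computations check out.
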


\begin{proof}
The self-intersection number $-2$ corresponds to the self-intersection of
a smooth rational curve on a $K3$ surface.
The cases with intersection number $1$ correspond to pairs of
planes that meet along a line, and the cases with
intersection number $0$ correspond to pairs of
planes that meet at a point.
\end{proof}

\begin{corollary}\label{corollary:16}
The $20\times 20$ matrix of intersection numbers of the planes $\Pi_{i_1 i_2 i_3}^{\pm}$,
where~\mbox{$0\le i_1<i_2<i_3\le 4$}, has rank~$16$.
Similarly, the $20\times 20$ matrix of intersection numbers of the planes $\Pi_{i_1 i_2 5}^{\pm}$, where $0\le i_1<i_2\le 4$,
also has rank $16$.
\end{corollary}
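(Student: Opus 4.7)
The plan is to compute the rank of the $20 \times 20$ intersection matrix $M$ directly, by performing a basis change that block-diagonalizes it into two $10 \times 10$ matrices. For each $3$-subset $I = \{i_1 < i_2 < i_3\}$ of $\{0,1,2,3,4\}$, set
\[
P_I = \Pi_I^+ + \Pi_I^-, \qquad Q_I = \Pi_I^+ - \Pi_I^-,
\]
which together form a new basis of the ambient $\mathbb{Q}$-vector space $\mathbb{Q}^{20}$. A routine case-by-case check using Lemma~\ref{lemma:intersection-Pi}---note that its case $c = 0$ is vacuous, since any two $3$-subsets of a $5$-element set meet---verifies the key identity $P_I \bullet Q_{I'} = 0$ for all $I, I'$. (In each case, the four contributions $\Pi_I^{\pm} \bullet \Pi_{I'}^{\pm}$ enter $P_I \bullet Q_{I'}$ with signs that cancel, by the symmetry of the intersection table under simultaneous flipping of both $\pm$ labels.) Hence $M$ is block-diagonal: $M = M_P \oplus M_Q$, with $M_P = (P_I \bullet P_{I'})$ and $M_Q = (Q_I \bullet Q_{I'})$ each $10 \times 10$.

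Another application of Lemma~\ref{lemma:intersection-Pi} gives $M_P = -2\, I_{10} + 2A$, where $A$ is the adjacency matrix of the Johnson graph $J(5,3)$---the triangular graph $T_5$---whose vertices are the ten $3$-subsets and whose edges are pairs with two-element overlap. The classical spectrum of $T_5$ is $\{6, 1, -2\}$ with multiplicities $\{1, 4, 5\}$, so $M_P$ has eigenvalues $\{10, 0, -6\}$ with the same multiplicities and rank $6$.

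The main step is to show that $M_Q$ is nonsingular, giving a total rank of $6 + 10 = 16$. Here $M_Q$ has diagonal $-6$ and off-diagonal entries $0$ (for $|I \cap I'| = 1$) or $\pm 2$ (for $|I \cap I'| = 2$), with the signs governed by the parameter $\delta$ of Lemma~\ref{lemma:intersection-Pi}. To handle this block I would exploit the action of $\SS_5 \subset \SS_6$ fixing the index $5$: this preserves $M_Q$ modulo the twist that sends $Q_I \mapsto \pm Q_{\sigma(I)}$ according to the parity of the sort permutation attached to $\sigma$ and $I$. Decomposing the resulting (twisted) $\SS_5$-representation on the $Q$-space into isotypic components and applying Schur's lemma then reduces the invertibility of $M_Q$ to the nonvanishing of finitely many scalars, which can be verified by hand or with GAP. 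The second $G$-orbit $\{\Pi_{i_1 i_2 5}^{\pm}\}$ is handled identically, since the two orbits are $\SS_6$-conjugate and the $\bullet$-pairing is $\SS_6$-invariant, so their intersection matrices are conjugate.

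The principal obstacle is the invertibility of $M_Q$: while the rank of $M_P$ drops out at once from the classical spectrum of $T_5$, the signed adjacency structure of $M_Q$ has no such classical description, and establishing its nonvanishing determinant requires either the equivariant analysis sketched above or a direct (possibly computer-assisted) calculation.
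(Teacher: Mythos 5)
Your route is genuinely different from the paper's, which disposes of this corollary with the single line ``straightforward computation'' of the rank of the full $20\times 20$ Gram matrix. The block-diagonalization into $P_I=\Pi_I^++\Pi_I^-$ and $Q_I=\Pi_I^+-\Pi_I^-$ is correct: the vanishing $P_I\bullet Q_{I'}=0$ follows in every case from Lemma~\ref{lemma:intersection-Pi} together with the symmetry of $\delta$ in the two triples, and the identification $M_P=-2I_{10}+2A$ with $A$ the adjacency matrix of $J(5,3)\cong T_5$, hence $\operatorname{rk}M_P=6$, is a genuinely cleaner way to see where the $4$-dimensional rank drop comes from. What the approach buys is structure and a smaller computation; what it does not buy is an escape from computation altogether, since the invertibility of the signed block $M_Q=-6I_{10}+2S$ is still only reduced to a finite verification (by hand, by GAP, or by the equivariant argument you sketch), which puts it on the same evidentiary footing as the paper's one-line proof. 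That reduction is acceptable, but be aware that the nonsingularity of $M_Q$ carries the entire content of the statement beyond the $T_5$ spectrum.

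There is, however, one genuine error: the claim that the two $G$-orbits of $20$ planes are $\SS_6$-conjugate is false, so the second half of the corollary does not follow from the first by transport of structure. A permutation $\sigma\in\SS_6$ carrying the family $\{\Pi_{i_1i_2i_3}^{\pm}:\,0\le i_1<i_2<i_3\le 4\}$ onto the family $\{\Pi_{i_1i_25}^{\pm}\}$ would have to send every $3$-subset of $\{0,\dots,4\}$ to a $3$-subset containing $5$, i.e.\ $\sigma^{-1}(5)$ would have to lie in every $3$-subset of $\{0,\dots,4\}$, which is impossible (compare $\{0,1,2\}$, $\{2,3,4\}$, and $\{0,1,3\}$). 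The combinatorics of the second family is still isomorphic to that of the first via $I\mapsto(\{0,\dots,4\}\setminus I)\cup\{5\}$ as far as the values of $c$ are concerned, so your $P/Q$ decomposition and the $T_5$ analysis of $M_P$ go through verbatim; but the signs $\delta$ entering $M_Q$ must be recomputed for the second family (possibly after an independent relabelling of the $\pm$ superscripts), and its nonsingularity checked separately. So the second assertion of the corollary needs its own instance of the same computation rather than the conjugacy shortcut.
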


\begin{proof}
Straightforward computation.
\end{proof}

Lemma~\ref{lemma:Anya} and Corollary~\ref{corollary:16}
imply the following result.

\begin{corollary}
\label{corollary:Pi-generate}
The classes of the $20$ planes $\Pi_{i_1 i_2 i_3}^{\pm}$,
where $0\le i_1<i_2<i_3\le 4$, generate the $\Q$-vector space
$$
\Cl(X)_{\Q}=\Cl(X)\otimes\Q.
$$
Similarly, the classes of the $20$ planes  $\Pi_{i_1 i_2 5}^{\pm}$, where $0\le i_1<i_2\le 4$,
also generate the $\Q$-vector space $\Cl(X)_{\Q}$.
\end{corollary}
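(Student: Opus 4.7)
The plan is to read off the result directly from the two preceding statements by means of the standard linear-algebraic observation that, for any symmetric bilinear form $b$ on a $\Q$-vector space $V$ and any finite collection of vectors $v_1,\dots,v_m\in V$, the rank of the Gram matrix $\bigl(b(v_i,v_j)\bigr)_{i,j}$ is at most $\dim\langle v_1,\dots,v_m\rangle$. This is immediate: choosing a basis of the span and writing each $v_i$ in coordinates, the Gram matrix factors as $C\,B\,C^{\mathsf{T}}$ with $C$ of format $m\times\dim\langle v_i\rangle$, so its rank cannot exceed $\dim\langle v_i\rangle$.

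Applying this observation to the intersection pairing $\bullet$ on $\Cl(X)_{\Q}$ and to the twenty classes $[\Pi_{i_1 i_2 i_3}^{\pm}]$ with $0\le i_1<i_2<i_3\le 4$, Corollary~\ref{corollary:16} tells us that their $\Q$-span has dimension at least $16$. Combined with $\dim\Cl(X)_{\Q}=16$ from Lemma~\ref{lemma:Anya}, this forces the span to coincide with $\Cl(X)_{\Q}$, which is the first assertion. The same argument applied to the second $20\times 20$ matrix treated in Corollary~\ref{corollary:16} yields the second assertion verbatim.

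There is no genuine obstacle at this stage: all of the substantive content is already contained in Lemma~\ref{lemma:intersection-Pi} and Corollary~\ref{corollary:16}, and the present corollary is merely a linear-algebraic repackaging of those facts together with the already-known total rank of $\Cl(X)$.
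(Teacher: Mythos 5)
Your argument is correct and is exactly the reasoning the paper intends: the paper simply states that Lemma~\ref{lemma:Anya} and Corollary~\ref{corollary:16} imply the result, and your Gram-matrix rank bound is the (elementary) missing link. No further comment needed.
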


By Corollary~\ref{corollary:Cl-Q}, the assertion of Proposition~\ref{proposition:Burkhardt-class-group} follows
from Corollary~\ref{corollary:Pi-generate}. This completes the proof of Theorem~\ref{theorem:Burkhardt}.

\section{Barth sextic double solid}
\label{section:Barth}

Let $\II$ be the trivial representation of the group $\A_5$, and let
$\VV$ be one of its two three-dimensional irreducible representations
(see e.g.~\cite[p.~2]{Atlas}). Put $\P^3=\P(\II\oplus\VV)$.
By \cite{Barth}, there exists a unique $\A_5$-invariant sextic surface $\BB$ in $\P^3$ with $65$ isolated singular points.
Moreover, the surface $\BB$ is given in appropriate homogeneous coordinates $x_0,x_1,x_2,x_3$ by equation
\begin{equation}\label{eq:Barth}
4(\tau^2x_0^2-x_1^2)(\tau^2x_1^2-x_2^2)
(\tau^2x_2^2-x_0^2)-(1+2\tau)x_3^2(x_0^2+x_1^2+x_2^2-x_3^{2})^2=0,
\end{equation}
where $\tau=\frac {1+\sqrt{5}}{2}$, and it has only nodes as singularities.

Recall from \cite[\S1]{Barth} that the group $\A_5$ acting on $\P^3$
so that the sextic $\BB$ is $\A_5$-invariant can be thought of as
the group of rotations of an icosahedron with $12$ vertices
$$
[\pm \tau:\pm 1:0:1], \quad [0:\pm \tau:\pm 1:1], \quad [\pm 1:0:\pm \tau:1].
$$
In particular, the group $\A_5$ contains the transformation
\begin{equation}\label{eq:N}
x_0\mapsto -x_0, \quad x_1\mapsto -x_1, \quad x_2\mapsto x_2, \quad x_3\mapsto x_3
\end{equation}
of order $2$,
the transformation
\begin{equation}\label{eq:R}
x_0\mapsto x_1\mapsto x_2\mapsto x_0, \quad x_3\mapsto x_3
\end{equation}
of order $3$, and the transformation
\begin{equation}\label{eq:M}
(x_0,x_1,x_2)\mapsto (x_0,x_1,x_2) \mathrm{M}^T, \quad x_3\mapsto x_3,
\end{equation}
where $\mathrm{M}$ is the matrix
\begin{multline*}
\left(
\begin{array}{ccc}
\frac{\tau}{\sqrt{\tau+2}} & -\frac{1}{\sqrt{\tau+2}} & 0\\
\frac{1}{\sqrt{\tau+2}} & \frac{\tau}{\sqrt{\tau+2}} & 0\\
0 & 0 & 1
\end{array}
\right)
\left(
\begin{array}{ccc}
1 & 0 & 0\\
0 & \cos\left(\frac{2\pi}{5}\right) & -\sin\left(\frac{2\pi}{5}\right)\\
0 & \sin\left(\frac{2\pi}{5}\right) & \cos\left(\frac{2\pi}{5}\right)
\end{array}
\right)
\left(
\begin{array}{ccc}
\frac{\tau}{\sqrt{\tau+2}} & -\frac{1}{\sqrt{\tau+2}} & 0\\
\frac{1}{\sqrt{\tau+2}} & \frac{\tau}{\sqrt{\tau+2}} & 0\\
0 & 0 & 1
\end{array}
\right)^{-1}=\\
=\left(
   \begin{array}{ccc}
     \frac{\sqrt{5}(3+\sqrt{5})}{2(5+\sqrt{5})} & \frac{\sqrt{5}}{5+\sqrt{5}} & \frac{1}{2} \\
     \frac{\sqrt{5}}{5+\sqrt{5}} & \frac{1}{2} & \frac{-\sqrt{5}-1}{4} \\
     -\frac{1}{2} & \frac{\sqrt{5}+1}{4} & \frac{\sqrt{5}-1}{4} \\
   \end{array}
 \right)
=\frac{1}{2}\left(
   \begin{array}{ccc}
      \tau & \tau-1 & 1\\
      \tau-1 & 1 & -\tau\\
      -1 & \tau & \tau-1\\
   \end{array}
\right).
\end{multline*}
The latter is a transformation
of order $5$ which corresponds to a rotation around the axis
through the vertices $[\tau:1:0:1]$ and $[-\tau:-1:0:1]$ of the icosahedron by the angle~$2\pi/5$.

Let $\Sigma_{15}$ be the $\A_5$-orbit of the point $[1:0:0:0]$,
let $\Sigma_{30}$ be the $\A_5$-orbit of the point~\mbox{$[1:0:0:1]$},
and let $\Sigma_{20}$ be the $\A_5$-orbit of the point $[1:1:1:1]$.
Then one has~\mbox{$|\Sigma_k|=k$},
and one can check that
the sextic surface $\BB$ is singular at the points of these
three $\A_5$-orbits.
Moreover, one has $\mathrm{Sing}(\BB)=\Sigma_{15}\cup\Sigma_{20}\cup\Sigma_{30}$,
see \cite[\S1]{Barth}.

\begin{remark}\label{remark:square}
Restricting the left hand side of \eqref{eq:Barth} to the plane $x_3=x_0+x_1+x_2$
we get an equation
$$
-4(5\tau+3)\left((\tau-2)(x_0x_1^2+x_1x_2^2+x_2x_0^2)+(\tau-3)x_0x_1x_2-(x_0^2x_1+x_1^2x_2+x_2^2x_0)\right)^2=0.
$$
Similarly, restricting the left hand side of \eqref{eq:Barth}
to the plane $x_3=x_0-x_1-x_2$ we get an equation
$$
-4(5\tau+3)\left(
(2-\tau)(x_0x_1^2-x_1x_2^2-x_2x_0^2)+(3-\tau)x_0x_1x_2-(x_0^2x_1+x_1^2x_2-x_2^2x_0)
\right)^2=0.
$$
\end{remark}

Define the plane $\Xi_{(v_0,v_1,v_2)}$ in $\P^3$ by equation $x_3=v_0x_0+v_1x_1+v_2x_2$,
where $v=(v_0,v_1,v_2)$ is one of the
following collections of coefficients:
\begin{equation}
\label{equation:v}
\aligned
&(1,1,1),\ (1,1,-1),\ (1,-1,1),\ (-1,1,1),\\
&(1,-1,-1),\ (-1,1,-1),\ (-1,-1,1),\ (-1,-1,-1),\\
&(\tau-1, \tau,0),\ (1-\tau,\tau,0),\ (1-\tau, -\tau, 0),\ (\tau-1,-\tau,0), (\tau, 0, 1-\tau),\ (\tau,0,\tau-1),\\
&(-\tau,0,\tau-1),\  (-\tau,0,1-\tau),\ (0,\tau-1,\tau),\ (0,\tau-1,-\tau),\ (0,1-\tau,-\tau),\ (0,1-\tau,\tau).
\endaligned
\end{equation}
There are $20$ planes like this, and they form a single $\A_5$-orbit.
Similarly, define the plane~$\Theta_{(u_0,u_1,u_2)}$ in $\P^3$ by equation $u_0x_0+u_1x_1+u_2x_2=0$,
where $u=(u_0,u_1,u_2)$ is one of the
following collections of coefficients:
$$
(\tau,1,0),\ (\tau,-1,0),\ (0,\tau,1),\ (0,\tau,-1),\ (1,0,\tau),\ (-1, 0, \tau).
$$
There are $6$ planes like this, and they form
a single $\A_5$-orbit.

\begin{lemma}\label{lemma:restriction-double-cubic}
A restriction of the sextic $\BB$ to each of the planes
$\Xi_v$ is a smooth cubic curve taken with multiplicity~$2$.
A restriction of the sextic $\BB$ to each of the planes
$\Theta_u$ is a union of a line taken with multiplicity~$2$ and an irreducible conic taken with multiplicity~$2$.
\end{lemma}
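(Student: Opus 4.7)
The plan is to reduce to a single representative in each of the two $\A_5$-orbits, using the $\A_5$-invariance of $\BB$. Since the twenty planes $\Xi_v$ form one $\A_5$-orbit and the six planes $\Theta_u$ form another, it suffices to verify the claim for one plane in each family.

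For the $\Xi_v$ orbit I would take $\Xi_{(1,1,1)}$, the plane $x_3=x_0+x_1+x_2$. Remark~\ref{remark:square} already exhibits the restriction of the defining equation of $\BB$ to this plane as $-4(5\tau+3)$ times the square of an explicit cubic form
$$
C(x_0,x_1,x_2)=(\tau-2)(x_0x_1^2+x_1x_2^2+x_2x_0^2)+(\tau-3)x_0x_1x_2-(x_0^2x_1+x_1^2x_2+x_2^2x_0),
$$
so the scheme-theoretic intersection $\BB\cap\Xi_{(1,1,1)}$ is the cubic curve $\{C=0\}$ taken with multiplicity $2$. It then remains to check that the cubic $\{C=0\}\subset\P^2$ is smooth, which is a finite computation: one verifies that the three partial derivatives $\partial C/\partial x_i$, which are conics, have no common zero. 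The $\mathbb{Z}/3$-symmetry of $C$ under the cyclic permutation $x_0\to x_1\to x_2\to x_0$ cuts the bookkeeping by a factor of three.

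For the $\Theta_u$ orbit I would take $\Theta_{(\tau,1,0)}$, the plane $x_1=-\tau x_0$. Substituting this relation into \eqref{eq:Barth} produces a sextic form in $x_0, x_2, x_3$, and I would show by direct expansion that it equals, up to a nonzero scalar, the square of the product of a linear form $L(x_0,x_2,x_3)$ and a quadratic form $Q(x_0,x_2,x_3)$. Irreducibility of the conic $\{Q=0\}\subset\Theta_{(\tau,1,0)}$ is then checked by computing that the corresponding symmetric $3\times 3$ matrix is nondegenerate.

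The main obstacle is not conceptual but computational: the two substitutions, and in particular the verification of the factorization pattern for $\BB\vert_{\Theta_u}$, involve tedious manipulation of polynomials with coefficients in $\Q(\tau)$ and are best executed in a computer algebra system. Given these computations, the lemma follows by applying elements of $\A_5$ to transport the assertion from the chosen representatives to all other planes in the two orbits.
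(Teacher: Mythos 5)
Your proposal is correct and follows essentially the same route as the paper: reduce to one representative in each $\A_5$-orbit, invoke Remark~\ref{remark:square} for $\Xi_{(1,1,1)}$, and substitute directly for a plane $\Theta_u$. The only thing worth adding is that the $\Theta_u$ computation you fear is tedious is in fact immediate, since each $\Theta_u$ is one of the six planes $l_i=0$ cut out by the linear factors of the first summand of \eqref{eq:Barth}, so that summand vanishes identically on $\Theta_u$ and the restriction is visibly $-(1+2\tau)x_3^2(\,\cdot\,)^2$, a double line times a double (nondegenerate diagonal, hence irreducible) conic.
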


\begin{proof}
It is enough to check the assertion for one of the planes $\Xi_v$ and one of the planes~$\Theta_u$.
The restriction of $\BB$ to the plane $\Xi_{1,1,1}$
is given by equation
$$
\left((\tau-2)(x_0x_1^2+x_1x_2^2+x_2x_0^2)+(\tau-3)x_0x_1x_2-(x_0^2x_1+x_1^2x_2+x_2^2x_0)\right)^2=0,
$$
see Remark~\ref{remark:square}.
Similarly, the restriction of $\BB$ to the plane $\Theta_{(-1,0,\tau)}$
is given by equation
$$
x_3^2\left(x_1^2+(1+\tau^2)x_2^2-x_3^{2}\right)^2=0.
$$
\end{proof}

Denote by $\Upsilon$ the plane in $\mathbb{P}^3$ that is given by $x_3=0$,
so that $\Upsilon\cong\P(\VV)$.
Recall that
for every~\mbox{$k\in\{6,10,15\}$} there is a unique $\A_5$-orbit $\Omega_k$ of length $k$ in $\Upsilon$,
and there is a unique $\A_5$-invariant curve $\LL_k$ in $\Upsilon$ that is a union of
$k$ lines, see e.g.~\cite[Lemma~5.3.1(i),(ii)]{CheltsovShramov}.

\begin{lemma}
\label{lemma:Barth-lines}
Let $\ell$ be a line in $\Upsilon$ that is not an irreducible component of $\mathcal{L}_6$.
Suppose that~\mbox{$(\ell\cdot\mathcal{L}_6)_P\geqslant 2$}
for every point $P\in\ell\cap\mathcal{L}_6$. Then $\ell$ is an irreducible component of $\mathcal{L}_{10}$.
\end{lemma}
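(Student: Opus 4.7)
The plan is to argue in two stages: first show that $\ell$ is a trisecant of $\Omega_{15}$ meeting $\LL_6$ nowhere else, and then identify such a trisecant with a component of $\LL_{10}$.

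For the first stage, since $\ell$ is not a component of $\LL_6$, B\'ezout gives $\ell\cdot\LL_6=6$, and the hypothesis $(\ell\cdot\LL_6)_P\ge 2$ at every $P\in\ell\cap\LL_6$ immediately bounds the cardinality of $\ell\cap\LL_6$ by $3$. At any smooth point of $\LL_6$ --- a point lying on exactly one of its $6$ component lines --- the line $\ell$ meets that component transversally with multiplicity~$1$. Hence every $P\in\ell\cap\LL_6$ must be a singular point of $\LL_6$. Using the classical description of the icosahedral configuration (see \cite[Lemma~5.3.1]{CheltsovShramov}), namely that no three components of $\LL_6$ are concurrent, the singular locus $\Sing(\LL_6)$ consists of the $\binom{6}{2}=15$ ordinary nodes where pairs of components cross, and these form the orbit $\Omega_{15}$. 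Therefore $\ell$ passes through exactly $3$ points of $\Omega_{15}$, each with local intersection~$2$. Since two distinct components of $\LL_6$ meet in a unique point, no two of these three nodes can share a common component (else $\ell$ would coincide with it), so the three nodes cover all $6$ components of $\LL_6$ and induce a partition of them into $3$ pairs.

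For the second stage, the $\A_5$-orbit of $\ell$ is a finite $\A_5$-invariant set of lines satisfying the same trisecant condition. I would invoke the classification of small $\A_5$-invariant configurations of lines in $\P(\VV)$: up to length~$15$, the only $\A_5$-orbits of lines are $\LL_6$, $\LL_{10}$, and $\LL_{15}$. The first is excluded by hypothesis. To rule out $\LL_{15}$, each of its components is the pointwise fixed line $\ell_\sigma$ of an involution $\sigma\in\A_5$, and a direct analysis shows that $\sigma$ permutes the $6$ components of $\LL_6$ with cycle type $(1,1,2,2)$: two components are fixed by $\sigma$ (both passing through the isolated $\sigma$-fixed point $P_\sigma\in\Omega_{15}$), while the remaining four form two swapped pairs. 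Consequently $\ell_\sigma$ meets $\LL_6$ at two smooth points of multiplicity~$1$ (where the $\sigma$-fixed components cross $\ell_\sigma$ away from $\Omega_{15}$) and at two nodes of multiplicity~$2$ (the intersections of the swapped pairs, each lying on $\ell_\sigma$); the hypothesis is therefore violated. It follows that $\ell$ must be a component of $\LL_{10}$.

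The principal obstacle is in the second stage: certifying that no $\A_5$-orbit of lines beyond the three named ones consists of trisecants to $\Omega_{15}$. The cleanest route is direct enumeration --- using that each point of $\Omega_{15}$ has stabilizer of order~$4$ (isomorphic to $C_2\times C_2$), one counts the collinear triples in $\Omega_{15}$ and verifies that this $\A_5$-invariant set has cardinality exactly $10$, matching the components of $\LL_{10}$.
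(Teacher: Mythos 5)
Your first stage is correct and coincides with the paper's: since $\LL_6$ is a union of lines, a line $\ell$ that is not a component meets each component transversally, so the hypothesis forces every point of $\ell\cap\LL_6$ to lie in $\Sing(\LL_6)=\Omega_{15}$, where $\LL_6$ has multiplicity $2$; degree $6$ then gives exactly three such points. The genuine gap is in the second stage, precisely the step you flag yourself: you never prove that a line through three points of $\Omega_{15}$ must be a component of $\LL_6$ or $\LL_{10}$. Your first route fails on two counts. You have no a priori bound on the length of the $\A_5$-orbit of $\ell$ (nothing in stage one controls its stabilizer, so the orbit could have length $12$, $20$, $30$ or $60$), and the classification you invoke is not even correct as stated: $\P(\VV)$ carries a further orbit of $12$ lines, dual to the two non-axial fixed points of each subgroup of order $5$ (whose stabilizer is exactly that $C_5$), so ``$\LL_6$, $\LL_{10}$, $\LL_{15}$'' is not the complete list of orbits of length at most $15$. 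Your second route --- counting collinear triples in $\Omega_{15}$ --- is a restatement of what must be proved rather than a proof: the fact that each point of $\Omega_{15}$ has stabilizer of order $4$ says nothing about which of the $\binom{15}{3}$ triples are collinear, and certifying that the only collinear triples are the $60$ on $\LL_6$ and the $10$ on $\LL_{10}$ would require an explicit computation you do not carry out.

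The paper closes exactly this gap with a short conceptual argument you are missing: the $\A_5$-invariant conic on $\P(\VV)$ defines an equivariant polarity under which $\Omega_{15}$ corresponds to the set of components of $\LL_{15}$, so lines through pairs of points of $\Omega_{15}$ correspond to pairwise intersection points of components of $\LL_{15}$, and by \cite[Theorem~6.1.2(xvi)]{CheltsovShramov} these all lie in $\Omega_6\cup\Omega_{10}\cup\Omega_{15}$. Dualizing back, every line through two (a fortiori three) points of $\Omega_{15}$ is a component of $\LL_6$, $\LL_{10}$ or $\LL_{15}$; the case of $\LL_{15}$ is then excluded either by \cite[Theorem~6.1.2(xiv)]{CheltsovShramov} or by your cycle-type analysis of an involution on the six components of $\LL_6$, which is correct and is a nice self-contained substitute for that citation. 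Without the polarity step (or an equivalent explicit verification) your argument does not establish the lemma.
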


\begin{proof}
By~\cite[Theorem~6.1.2(i)]{CheltsovShramov}
the singular points of the curve $\LL_6$ are the points of $\Omega_{15}$;
the multiplicity of $\LL_6$ at each of these points equals~$2$.
Therefore, the line $\ell$ must contain three points of $\Omega_{15}$.
On the other hand, all lines passing through pairs of points of
$\Omega_{15}$ are irreducible
components of the curves $\LL_6$, or $\LL_{10}$, or $\LL_{15}$;
this follows from polarity (see \cite[Remark~5.3.2]{CheltsovShramov})
and the fact that the points of pairwise intersections
of the irreducible components of $\LL_{15}$ are the points of the $\A_5$-orbits
$\Omega_6$, $\Omega_{10}$, and $\Omega_{15}$, see \cite[Theorem~6.1.2(xvi)]{CheltsovShramov}.
This implies that~$\ell$ is an irreducible
component of either $\LL_6$, or $\LL_{10}$, or $\LL_{15}$.
However, the first of these cases does not occur by assumption,
and the third is excluded by \cite[Theorem~6.1.2(xiv)]{CheltsovShramov}.
\end{proof}

Note that the intersection $\Xi_v\cap\Upsilon$ is an irreducible component of $\mathcal{L}_{10}$.
Indeed, this intersection is a line in $\Upsilon$ whose $\A_5$-orbit has length $k$ that divides~$20$,
and moreover $k<20$ because $\Xi_{(1,1,1)}$ and $\Xi_{(-1,-1,-1)}$ intersect $\Upsilon$ by the same line.
Similarly, the intersection~\mbox{$\Theta_u\cap\Upsilon$} is an irreducible component of $\mathcal{L}_{6}$,
because the latter is the only $\A_5$-orbit in $\Upsilon$ that consists of at most $6$ lines.

\begin{proposition}
\label{proposition:Barth-planes}
Suppose that $\Pi$ is a plane in $\P^3$ such that the restriction
$\BB\vert_{\Pi}$ is a cubic curve taken with multiplicity~$2$.
Then $\Pi$ is one of the planes $\Xi_v$ or $\Theta_u$.
\end{proposition}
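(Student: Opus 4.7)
The plan is to restrict the problem to the invariant plane $\Upsilon=\{x_3=0\}$ and extract a combinatorial invariant from $\ell:=\Pi\cap\Upsilon$. Setting $x_3=0$ in~\eqref{eq:Barth} gives
\[
\BB\vert_\Upsilon=\{4(\tau^2x_0^2-x_1^2)(\tau^2x_1^2-x_2^2)(\tau^2x_2^2-x_0^2)=0\},
\]
so $\BB\vert_\Upsilon$ is the reduced curve $\mathcal{L}_6$. Hence $\Pi\neq\Upsilon$ and $\ell$ is a line. If $\ell\not\subset\BB$, restricting the identity $\BB\vert_\Pi=2C$ further to $\ell$ shows that $\mathcal{L}_6\cdot\ell=2(C\cdot\ell)$ is an even divisor on $\ell$, so every point of $\ell\cap\mathcal{L}_6$ has intersection multiplicity at least~$2$; Lemma~\ref{lemma:Barth-lines} then forces $\ell$ to be an irreducible component of $\mathcal{L}_{10}$. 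Otherwise $\ell\subset\mathcal{L}_6$. By $\A_5$-equivariance, it suffices to fix one representative line in each orbit.

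For $\ell\subset\mathcal{L}_6$, since $\ell\subset\BB\cap\Pi$, one has $\BB\vert_\Pi=m\ell+R$ for some integer $m\ge 1$ and a residual curve $R$ not containing $\ell$. If $\BB\vert_\Pi=2C$ then $m$ must be even, so $m\ge 2$; equivalently, $\Pi$ is the tangent plane $T_p\BB$ for every general point $p\in\ell$. On the other hand, Lemma~\ref{lemma:restriction-double-cubic} asserts that $\BB\vert_{\Theta_u}=2L+2\mathcal{C}$ for a line $L\subset\Theta_u$ and an irreducible conic $\mathcal{C}\subset\Theta_u$; intersecting with $\Upsilon$ forces $L=\Theta_u\cap\Upsilon$, which equals $\ell$ for the unique $u$ with $\Theta_u\cap\Upsilon=\ell$. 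Thus $T_p\BB=\Theta_u$ for every general $p\in\ell$, so the tangent plane is constant along $\ell$, and consequently $\Pi=\Theta_u$.

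For $\ell\subset\mathcal{L}_{10}$ with $\ell\not\subset\mathcal{L}_6$, the line $\ell$ meets $\BB$ at three nodes $P_1,P_2,P_3\in\Omega_{15}$, each with local intersection multiplicity~$2$. Near each $P_i$ the local equation of $\BB$ is a non-degenerate quadratic form $q_i$ plus higher-order terms, and the requirement that $\BB\vert_\Pi$ be locally a perfect square at $P_i$ forces $q_i\vert_\Pi$ to be a perfect square---that is, $\Pi$ must be tangent at $P_i$ to the smooth quadric cone $Q_i=\{q_i=0\}$. In the projectivized tangent space $\P(T_{P_i}\P^3)\cong\P^2$, the pencil of planes through $\ell$ corresponds to the pencil of lines through the point $p_i$ representing the tangent direction of $\ell$ at $P_i$; since $\BB\cdot\ell$ has local multiplicity exactly~$2$ at $P_i$, the point $p_i$ lies off the smooth conic $Q_i$, so exactly two lines from $p_i$ are tangent to $Q_i$. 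Hence at most two planes in the pencil through $\ell$ can satisfy $\BB\vert_\Pi=2C$, and since the two $\Xi_v$ with $\Xi_v\cap\Upsilon=\ell$ (exhibited before Lemma~\ref{lemma:Barth-lines}) are already two such planes, these exhaust the possibilities.

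The principal difficulty is the local analysis at the nodes in the second case: translating the global condition ``$\BB\vert_\Pi=2C$'' into the tangent-cone condition at each $P_i$, and verifying that the tangent direction of $\ell$ at $P_i$ avoids $Q_i$ so that the counting argument in $\P^2$ yields the sharp bound of two tangent lines. The first case is comparatively easy, relying only on Lemma~\ref{lemma:restriction-double-cubic} and elementary tangent-plane considerations.
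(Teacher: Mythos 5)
Your reduction to the two cases ($\ell=\Pi\cap\Upsilon$ a component of $\mathcal{L}_6$ or of $\mathcal{L}_{10}$) is exactly the paper's, and you even make explicit why the hypothesis of Lemma~\ref{lemma:Barth-lines} is satisfied (evenness of $\mathcal{L}_6\vert_\ell=2C\vert_\ell$), which the paper leaves implicit. From there, however, your treatment of the two cases is genuinely different and, as far as I can check, correct. The paper writes $\Pi$ as $\tau x_0+x_1=\lambda x_3$, resp.\ $x_0+x_1+x_2=\lambda x_3$, and rules out all but the special values of $\lambda$ by a direct inspection of coefficients of the restricted sextic (constant term, leading coefficient, odd coefficients along a suitable line). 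You replace this with a synthetic argument: in the $\mathcal{L}_6$ case, double vanishing of $\BB\vert_\Pi$ along $\ell$ forces $\Pi=T_p\BB$ at a general smooth point $p\in\ell$, and Lemma~\ref{lemma:restriction-double-cubic} already identifies that tangent plane as $\Theta_u$; in the $\mathcal{L}_{10}$ case, the perfect-square condition at a node $P_i\in\ell$ forces the projectivized tangent plane to be tangent to the tangent-cone conic $Q_i$, and since the direction of $\ell$ lies off $Q_i$ (local intersection exactly $2$) the discriminant is a nonzero binary quadric on the pencil of planes through $\ell$, giving at most two solutions --- which are exhausted by the two surfaces $\Xi_v$, $\Xi_{v'}$ lying over $\ell$. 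Your version is coordinate-free and explains structurally why each component of $\mathcal{L}_{10}$ carries exactly two double-cubic planes and each component of $\mathcal{L}_6$ exactly one; the paper's version is more elementary and verifiable by hand but opaque. Two small points you should make explicit: (a) the three points of $\ell\cap\mathcal{L}_6$ are honestly nodes of the surface $\BB$, which follows from $\Omega_{15}=\Sigma_{15}$ (the unique $\A_5$-orbit of length $15$ in $\Upsilon$); and (b) $q_i\vert_\Pi\neq 0$, since a rank-$3$ quadratic form cannot vanish on a $2$-plane, so that $q_i\vert_\Pi$ really is the leading term of the local equation of $\BB\vert_\Pi$ at $P_i$ and must therefore be a square of a linear form. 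Both are immediate.
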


\begin{proof}
Observe that $\Pi\ne\Upsilon$, because $\BB\vert_{\Upsilon}=\mathcal{L}_{6}$ is a reduced curve.
So we put $\ell=\Pi\cap\Upsilon$.
By Lemma~\ref{lemma:Barth-lines}, the line $\ell$ is
an irreducible component of either $\mathcal{L}_{6}$ or $\mathcal{L}_{10}$.
Since $\A_5$ permutes transitively the irreducible components of each of the curves $\mathcal{L}_{6}$ and $\mathcal{L}_{10}$,
we may assume that~$\ell$ is given either by
$\tau x_0+x_1=x_3=0$, or by $x_0+x_1+x_2=x_3=0$.

Suppose that $\ell$ is given by $\tau x_0+x_1=x_3=0$.
Then $\Pi$ is given by the equation~\mbox{$\tau x_0+x_1=\lambda x_3$}
for some $\lambda\in\mathbb{C}$. If $\lambda=0$, then $\Pi=\Theta_{(\tau,1,0)}$.
If $\lambda\ne 0$, then the  restriction $\BB\vert_{\Pi}$ is given by
\begin{multline*}
0=4(\tau^2x_0^2-x_1^2)(\tau^2x_1^2-x_2^2)
(\tau^2x_2^2-x_0^2)\\
-\frac{1+2\tau}{\lambda^2}(\tau x_0+x_1)^2\left(x_0^2+x_1^2+x_2^2-\frac{1}{\lambda^2}(\tau x_0+x_1)^2\right)^2.
\end{multline*}
This (possibly non-reduced) sextic curve contains the line $\tau x_0+x_1=0$ with multiplicity~$1$,
so that it cannot be a double cubic.

Now we suppose that $\ell$ is given by $x_0+x_1+x_2=x_3=0$.
Then the plane $\Pi$ is given by the equation $x_0+x_1+x_2=\lambda x_3$
for some $\lambda\in\mathbb{C}$. If $\lambda=1$, then $\Pi=\Xi_{(1,1,1)}$.
If $\lambda=-1$, then~\mbox{$\Pi=\Xi_{(-1,-1,-1)}$}.

Suppose that $\lambda\ne 0$. Put $\mu=\frac{1}{\lambda}$. Then $\mu\ne 0$, and the restriction $\BB\vert_{\Pi}$ is given by the equation~\mbox{$f(x_0,x_1,x_2)=0$}, where
\begin{multline*}
f(x_0,x_1,x_2)=4(\tau^2x_0^2-x_1^2)(\tau^2x_1^2-x_2^2)(\tau^2x_2^2-x_0^2)\\
-(1+2\tau)\mu^2(x_0+x_1+x_2)^2\left(x_0^2+x_1^2+x_2^2-\mu^2(x_0+x_1+x_2)^2\right)^2.
\end{multline*}
We have to show that the polynomial $f(x_0,x_1,x_2)$ is not a square of a cubic polynomial unless~\mbox{$\mu=\pm 1$}.
To show this it is enough to prove the same assertion for the polynomial
\begin{multline*}
f(1,x_1,-x_1)=-(8\tau+4)x_1^6-(4+8\tau)(\mu^2-3)x_1^4\\
+(4+8\tau)(\mu^2-\tau)(\mu^2+\tau-1)x_1^2-(1+2\tau)\mu^2(\mu+1)^2(\mu-1)^2.
\end{multline*}
This follows from the fact that if $\mu\ne\pm 1$, then both the constant term and the leading coefficient
of $f(1,x_1,-x_1)$ are not zero, while the coefficients at $x_1$ and $x_1^3$ are both zero.

Thus, we see that $\lambda=0$, so that $\Pi$ is given by $x_0+x_1+x_2=0$.
Expressing $x_0=-x_1-x_2$, we see that the restriction $\BB\vert_{\Pi}$ is given by
the equation $g(x_1,x_2,x_3)=0$, where
\begin{multline*}
g(x_1,x_2,x_4)=4(\tau^2(x_1+x_2)^2-x_1^2)(\tau^2x_1^2-x_2^2)\left(\tau^2x_2^2-(x_1+x_2)^2\right)\\
-(1+2\tau)x_3^2\left((x_1+x_2)^2+x_1^2+x_2^2-x_3^{2}\right)^2.
\end{multline*}
We have to show that the polynomial $g(x_1,x_2,x_3)$ is not a square of a cubic polynomial.
To show this it is enough to prove the same assertion for the polynomial
$$
g(x_1,-x_1,1)=-(1+2\tau)\big(4x_1^6+4x_1^4-4x_1^2+1\big).
$$
This polynomial is not a square of a cubic polynomial, because both its constant term and the leading coefficient are not zero, while the coefficients at $x_1$ and $x_1^3$ are both zero.
\end{proof}

Let $\pi\colon X\to\P^3$ be a double cover branched over the sextic~$\BB$.
The equation of $X$ can be written in the weighted projective space~\mbox{$\P(1,1,1,1,2)$} with weighted homogeneous
coordinates~\mbox{$x_0,\ldots, x_3$}, and~$w$ as
\begin{equation}\label{eq:Barth-double-solid}
w^2+4l_1l_2l_3l_4l_5l_6-q_3^2=0,
\end{equation}
where
$l_1=\tau x_0-x_1$, $l_2=\tau x_1-x_2$,
$l_3=\tau x_2-x_0$, $l_4=\tau x_0+x_1$,
$l_5=\tau x_1+x_2$, $l_6=\tau x_2+x_0$,
and
$$
q_3=\sqrt{1+2\tau}x_3(x_0^2+x_1^2+x_2^2-x_3^2).
$$

The class group of the threefold $X$ was described by Endrass.

\begin{lemma}[{\cite[Example~3.7]{En99}}]
\label{lemma:Stefan}
One has $\rkCl(X)=14$.
\end{lemma}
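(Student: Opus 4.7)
The plan is to bound $\rkCl(X)$ from below by exhibiting explicit non-Cartier Weil divisor classes on $X$, and to match this with an upper bound obtained by computing $b_{2}$ of a small resolution of the $65$ nodes of~$X$.

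For the lower bound I would exploit Proposition~\ref{proposition:Barth-planes} together with Lemma~\ref{lemma:restriction-double-cubic}: for each of the $20$ planes $\Xi_{v}$ and the $6$ planes $\Theta_{u}$, the restriction $\BB\vert_{\Pi}$ is everywhere non-reduced, so the scheme-theoretic preimage $\pi^{-1}(\Pi)$ splits as a union of two irreducible surfaces $D_{\Pi}^{+}$ and $D_{\Pi}^{-}$ with $D_{\Pi}^{+}+D_{\Pi}^{-}\sim H$, where $H=\pi^{*}\mathcal{O}_{\P^{3}}(1)$. Each $D_{\Pi}^{\pm}$ is a Weil but not Cartier divisor on~$X$, obtained by picking one of the two sheets of the double cover over the non-reduced component of $\BB\vert_{\Pi}$. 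This produces $26$ new classes $[D_{\Pi}^{+}]$, and I would compute the rank of the sublattice they span in $\Cl(X)$ by pairing them, via restriction to a general smooth hyperplane section of $X$ (which is a $K3$ surface), in the spirit of Lemma~\ref{lemma:intersection-Pi}. Since the $\A_{5}$-action groups these $26$ planes into one orbit of length $20$ and one orbit of length $6$, only a handful of intersection numbers actually need to be computed before the Gram matrix is pinned down; modulo the $26$ relations $D_{\Pi}^{+}+D_{\Pi}^{-}\sim H$ I expect the resulting sublattice together with $[H]$ to have rank exactly $14$, giving $\rkCl(X)\ge 14$.

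For the upper bound I would pass to a small analytic resolution $f\colon\tilde X\to X$, use the natural isomorphism $\Cl(X)\cong\mathrm{Pic}(\tilde X)$, and compute $b_{2}(\tilde X)$ by comparison with a smoothing $X^{*}$ of $X$, which is a smooth sextic double solid with $b_{2}(X^{*})=1$. Standard vanishing-cycle bookkeeping gives $b_{2}(\tilde X)=1+\delta$, where $\delta$ is the number of linearly independent relations among the $65$ vanishing cycles associated to the nodes; equivalently, $\delta$ equals the defect of the linear system cut out on the nodes by an appropriate adjoint series on $\P^{3}$. Showing $\delta=13$ reduces to a concrete cohomological computation on $\P^{3}$, which becomes very manageable once it is organised according to the $\A_{5}$-isotypic decomposition of the relevant cohomology groups.

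The main obstacle is precisely this last step: verifying that the $13$ non-Cartier Weil classes constructed in the lower-bound stage account for the entire defect, i.e.\ that no further non-Cartier Weil divisors remain to be found. Carrying out the necessary counts of independent adjoint conditions by hand is delicate, but the highly symmetric configuration of the $65$ nodes described in~\cite{Barth}, combined with the $\A_{5}$-symmetry, reduces the computation to a manageable character-theoretic verification, which is essentially the content of~\cite[Example~3.7]{En99}.
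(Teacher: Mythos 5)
The paper does not actually prove this lemma: the statement is imported wholesale from \cite[Example~3.7]{En99}, so there is no internal argument to compare yours against. Your outline follows the standard route for such statements, and its lower-bound half is, in substance, what the paper itself does later for a different purpose: Table~\ref{table:big} and Corollary~\ref{corollary:14} show that the Gram matrix of the twenty surfaces $\Xi_v^{+}$ (restricted to a general $K3$ in $|-K_X|$) already has rank $14$, which forces $\rkCl(X)\geqslant 14$ independently of \cite{En99}, since a $\Q$-linear relation among the classes would produce a corresponding relation among the rows of the Gram matrix. Two minor points on that half: adding the six $\Theta_u^{+}$ and $H$ cannot raise the rank past $14$ (this is consistent with, but not needed for, the bound), and whether the $D_\Pi^{\pm}$ are Cartier is irrelevant --- only their classes in $\Cl(X)$ matter.

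The genuine gap is the upper bound, which is the entire content of the lemma. Your reduction to $b_2(\widetilde{X})=1+\delta$ for a small resolution $\widetilde{X}$, with $\delta$ the defect of the $65$ nodes, is correct (here one also needs $h^{2,0}(\widetilde{X})=0$ so that $\rkPic(\widetilde{X})=b_2(\widetilde{X})$, which holds since $X$ is a nodal Fano). But the decisive computation --- that the $65$ nodes of the Barth sextic impose exactly $52$ independent conditions on the $56$-dimensional space of quintics, i.e.\ $\delta=13$, or equivalently Endra\ss's computation of the code of even sets of nodes --- is only gestured at and then explicitly deferred back to \cite[Example~3.7]{En99}, the very reference the lemma cites. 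As it stands your proposal therefore establishes $\rkCl(X)\geqslant 14$ by a method the paper also uses, but does not give an independent proof of $\rkCl(X)\leqslant 14$; to close the argument you would have to carry out the adjoint-condition count (or the character-theoretic version of it you allude to) rather than assume it.
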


\begin{proposition}
\label{proposition:Barth-rational}
The threefold $X$ is rational.
\end{proposition}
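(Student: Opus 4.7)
The plan is to exploit the factored form of the defining equation~\eqref{eq:Barth-double-solid}. Set $A = l_1 l_2 l_3$ and $B = l_4 l_5 l_6$, so the equation of $X$ in $\P(1,1,1,1,2)$ reads
$$
w^2 = q_3^2 - 4 A B,
$$
which factors as $(w - q_3)(w + q_3) = -4 A B$. This factorisation motivates the introduction of the rational function
$$
t = \frac{q_3 - w}{2A} = \frac{2B}{q_3 + w}
$$
on $X$. Solving $w = q_3 - 2At$ and substituting back into the defining equation, one obtains the single relation
$$
A(x)\, t^2 - q_3(x)\, t + B(x) = 0.
$$
Hence $X$ is birational to the hypersurface $\mathcal{Y} \subset \P^3 \times \P^1$ cut out by this relation, where the $\P^1$ factor carries the coordinate $t$. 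The projection $\mathcal{Y} \to \P^3$ is a $2$-to-$1$ map whose discriminant in $t$ is exactly $q_3^2 - 4AB$, thereby recovering the original double cover $X \to \P^3$.

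Next, I would study the alternative projection $\mathcal{Y} \to \P^1$ onto the $t$-factor. This realises $\mathcal{Y}$ as a pencil of cubic surfaces in $\P^3$ whose two distinguished fibres are the completely reducible cubics $\{B = 0\} = l_4 \cup l_5 \cup l_6$ at $t = 0$ and $\{A = 0\} = l_1 \cup l_2 \cup l_3$ at $t = \infty$. Every fibre passes through the base scheme $\{A = B = q_3 = 0\}$, which is zero-dimensional and consists of $27$ reduced points: nine points lie in the plane $\Upsilon = \{x_3 = 0\}$ (obtained as the pairwise intersections of the three lines $\{l_i = 0\} \cap \Upsilon$ with the three lines $\{l_{j+3} = 0\} \cap \Upsilon$ for $i,j \in \{1,2,3\}$), while eighteen more arise as the intersections of the nine lines $\{l_i = l_{j+3} = 0\}$ with the quadric $x_0^2 + x_1^2 + x_2^2 - x_3^2 = 0$. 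Each base point gives rise to a constant section of $\mathcal{Y} \to \P^1$, so the generic smooth fibre $S_t$ carries $27$ marked points.

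The remaining task is to show that the cubic surface pencil $\mathcal{Y} \to \P^1$ is birationally trivial. Concretely, the most direct strategy is to exhibit six among the $27$ base points which form a \emph{sixer} on the generic $S_t$, i.e.\ a collection of six points in general position on a smooth cubic surface such that blowing them down produces $\P^2$. Blowing down the corresponding six constant sections in $\mathcal{Y}$ then yields a birational map $\mathcal{Y} \dashrightarrow \P^2 \times \P^1$, and thus $X$ is rational.

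The principal obstacle is the explicit verification that six of the $27$ base points form a sixer on the generic fibre; this is a combinatorial/linear-algebraic check which the $\A_5$-symmetry of the configuration, acting transitively on each of its orbits, should constrain sufficiently to carry out. A cleaner alternative is to bypass this by producing a line $\ell \subset \P^3$ whose intersection with each $S_t$ is a fixed triple of points (such a line is a trisection of $\mathcal{Y} \to \P^1$); combined with the pencil it induces a conic bundle structure on $\mathcal{Y}$ over a rational surface, with a manifest section coming from the constant sections above, again yielding rationality.
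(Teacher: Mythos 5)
Your opening move is essentially the paper's: a substitution of the form $w=q_3-2(\text{product of some of the }l_i)\cdot t$ turns the double cover \eqref{eq:Barth-double-solid} into a fibration over $\P^1_t$ whose generic fibre is a cubic surface over the function field $\mathbb{F}=\C(t)$. (The paper splits the six linear forms as $l_1l_2$ and $l_3l_4l_5l_6$, passes to a quartic threefold containing a plane and projects from that plane; your $3+3$ splitting reaches the cubic surface $At^2-q_3t+B=0$ in one step. Either way one arrives at the same kind of object.) The genuine gap is in the second half. A smooth cubic surface over a non-closed field such as $\mathbb{F}$ is \emph{not} rational merely because it carries many $\mathbb{F}$-points: by the Segre--Manin theorem a minimal cubic surface is irrational even though a single rational point already makes it unirational, and nothing in your argument rules out minimality of the generic fibre. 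Moreover, your $27$ base points are points, not lines: a ``sixer'' that can be blown down to reach $\P^2$ is a Galois-stable set of six pairwise disjoint $(-1)$-\emph{curves} (lines of the cubic surface) defined over $\mathbb{F}$, and the constant sections through the base locus do not produce such lines, so the phrase ``blowing down six base points'' is not meaningful as stated. The trisection/conic-bundle alternative is likewise only a sketch, with no candidate line exhibited.

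What is actually needed, and what the paper supplies, is a pair of disjoint lines on the generic fibre, each defined over $\mathbb{F}$; contracting them gives a quintic del Pezzo surface, which is always rational, and hence the generic fibre, $Y$, and $X$ are rational. These lines are found by restricting the fibre equation to the planes $x_3=x_0+x_1+x_2$ and $x_3=x_0-x_1-x_2$, on which the Barth sextic becomes a perfect square (Remark~\ref{remark:square}), so that the cubic correspondingly factors over $\mathbb{F}$ as a product of a line and a conic; the two resulting lines are checked to be disjoint. This descent of part of the line configuration to $\mathbb{F}$ is the concrete geometric input missing from your proposal; without it (or some equivalent way of splitting the Picard lattice of the generic fibre over $\mathbb{F}$) the rationality of $X$ is not established.
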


\begin{proof}
Making a change of coordinates $w=2yl_1l_2+q_3$
we see that there is a birational map~\mbox{$\varphi\colon X\dasharrow Y$} to a quartic threefold
$Y$ given in the projective space $\P^4$ with homogeneous coordinates
$x_0,\ldots,x_3,y$ by equation
$
y^2l_1l_2+yq_3+l_3l_4l_5l_6=0.
$
The map $\varphi$ is given by the formula
$$
[x_0:x_1:x_2:x_3:w]\mapsto [2l_1l_2x_0:2l_1l_2x_1:2l_1l_2x_2:2l_1l_2x_3:w-q_3].
$$
The inverse birational map $\psi\colon Y\dasharrow X$ is given
by the Stein factorization of the linear projection from
the point $[0:0:0:0:1]$ in $\P^4$, so that
$\psi$ is defined by the formula
$$
[x_0:x_1:x_2:x_3:y]\mapsto [x_0:x_1:x_2:x_3:2l_1l_2y+q_3].
$$
The quartic $Y$ contains a plane $\Pi$ given by equations
$y=l_4=0$.
The projection~\mbox{$\sigma\colon Y\dasharrow \P^1$} from $\Pi$ is given
by $[\lambda:\mu]=[y:l_4]$,
where $\lambda$ and $\mu$ are homogeneous coordinates on $\P^1$.
$$
\xymatrix{
&&Y\ar@{^{(}->}[rr]\ar@/^/@{-->}[dd]^{\psi}\ar@{-->}[lldd]_{\sigma} && \P^4\ar@{-->}[dd]\\
&&&\\
\P^1&&X\ar@/^/@{-->}[uu]^{\varphi}\ar@{->}[rr]^{\pi} && \P^3}
$$

Putting $\lambda=\frac{y}{l_4}$ and $\mu=1$, we see that the general fiber of $\sigma$ (in a scheme sence)
is a cubic surface in the projective space $\P^3_{\mathbb{F}}$ over the field $\mathbb{F}=\mathbb{C}(\lambda)$,
that is given by equation
\begin{equation}
\label{eq:cubic}
\lambda^2l_1l_2l_4+\lambda q_3+l_3l_5l_6=0.
\end{equation}
Here we use $x_0,\ldots,x_3$ also as a homogeneous coordinates on $\P^3_{\mathbb{F}}$.

Restricting the left hand side of equation~\eqref{eq:cubic}
to the plane $\Pi'$ given by
$x_3=x_0+x_1+x_2$
and using Remark~\ref{remark:square}, we see
that the corresponding curve
is given by equation
\begin{equation}
\left( \lambda l_4+\sqrt{2\tau+1}(2\tau-3)l_3\right)
\left(\lambda l_1l_2+\sqrt{2\tau+1}l_5l_6\right)=0.
\end{equation}
Restricting the left hand side of equation~\eqref{eq:cubic}
to the plane $\Pi''$ given by
$x_3=x_0-x_1-x_2$
and using Remark~\ref{remark:square}, we see
that the corresponding curve
is given by equation
\begin{equation}
\left( \lambda l_1+\sqrt{2\tau+1}(2\tau-3)l_6\right)
\left(\lambda l_2l_4+\sqrt{2\tau+1}l_3l_5\right)=0.
\end{equation}
One can check that the lines in $\P^3_{\mathbb{F}}$ given by equations
$$
x_3-x_0-x_1-x_2=
\lambda l_4+\sqrt{2\tau+1}(2\tau-3)l_3=0
$$
and
$$
x_3-x_0+x_1+x_2=
\lambda l_1+\sqrt{2\tau+1}(2\tau-3)l_6=0
$$
are disjoint.
Since they are contained in the cubic surface~\eqref{eq:cubic},
we see that the cubic surface~\eqref{eq:cubic} is rational over the field~$\mathbb{F}$,
so that both $Y$ and $X$ are rational (over the field~$\C$).
\end{proof}

Now we are going to describe the generators of the group $\Cl(X)$.

The intersection of $X$ with the hypersurface $x_3=x_0+x_1+x_2$ splits as a union of two
surfaces~$\Xi_{(1,1,1)}^{+}$ and~$\Xi_{(1,1,1)}^{-}$ that are given by
$$
\left\{\aligned%
&x_3=x_0+x_1+x_2,\\
&w=C_{\pm}\cdot\Big((\tau-2)(x_0x_1^2+x_1x_2^2+x_2x_0^2)+(\tau-3)x_0x_1x_2-(x_0^2x_1+x_1^2x_2+x_2^2x_0)\Big),
\endaligned
\right.
$$
respectively, where $C_{\pm}=\pm2\sqrt{5\tau+3}$.
The image $\pi(\Xi_{(1,1,1)}^{+})=\pi(\Xi_{(1,1,1)}^{-})$ is the plane in $\mathbb{P}^3$ that is
given by equation~\mbox{$x_3=x_0+x_1+x_2$}, cf. Lemma~\ref{lemma:restriction-double-cubic}.
Since the $\A_5$-orbit of the plane~\mbox{$x_3=x_0+x_1+x_2$} consists of~$20$ planes,
the $\A_5$-orbit of the surface $\Xi_{(1,1,1)}^{+}$ consists of~$20$ surfaces.
Similarly, the $\A_5$-orbit of the surface $\Xi_{(1,1,1)}^{-}$ also consists of $20$ surfaces.
Denote by~$\Xi_{v}^{+}$ the surface in the the $\A_5$-orbit of~$\Xi_{(1,1,1)}^{+}$
such that~\mbox{$\pi(\Xi_v^{+})=\Xi_v$}, where~\mbox{$v=(v_0,v_1,v_2)$} is one of the
collections of coefficients listed in~\eqref{equation:v}.

\begin{remark}
The anticanonical degree of the surfaces $\Xi_v^{\pm}$ equals $1$,
i.e. one has $\Xi_v^{\pm}\cdot K_{X}^2=1$.
By Lemma~\ref{lemma:restriction-double-cubic}, the preimage on $X$
of a plane $\Theta_u$ also splits as a union of two surfaces $\Theta_u^{+}$ and $\Theta_u^{-}$
of anticanonical degree~$1$. It follows from Proposition~\ref{proposition:Barth-planes}
that there are no surfaces of anticanonical degree~$1$ on $X$ except
$\Xi_v^{\pm}$ and~$\Theta_u^{\pm}$.
\end{remark}

Fix a sufficiently general (smooth) $K3$ surface $S$ in the linear system $|-K_{X}|$.
For every two surfaces $\Xi_{v}^{+}$ and $\Xi_{v'}^{+}$, put
$$
\Xi_{v}^{+}\bullet\Xi_{v'}^{+}=\Xi_{v}^{+}\vert_{S}\cdot\Xi_{v'}^{+}\vert_{S}.
$$
Then~\mbox{$\Xi_{v}^{+}\bullet\Xi_{v}^{+}=-2$} by the adjunction formula.
Moreover, if  $v\ne v'$, then either~\mbox{$\Xi_{v}^{+}\bullet\Xi_{v'}^{+}=1$}
or~\mbox{$\Xi_{v}^{+}\bullet\Xi_{v'}^{+}=0$} by construction.
Furthermore, if $v\ne v'$, then~\mbox{$\Xi_{v}^{+}\bullet\Xi_{v'}^{+}=0$} if and only if
the intersection~\mbox{$\Xi_{v}^{+}\cap\Xi_{v'}^{+}$} consists of finitely many points.

Denote by $N$, $R$, and $M$ the transformations~\eqref{eq:N}, ~\eqref{eq:R}, and~\eqref{eq:M},
respectively.

\begin{example}
By Remark~\ref{remark:square}, the surface $\Xi_{(1,1,1)}^+$ is defined in $\P(1,1,1,1,2)$ by equations
$$
\left\{
  \begin{array}{ll}
    x_3=x_0+x_1+x_2, \\
    w=(\tau-2)(x_0x_1^2+x_1x_2^2+x_2x_0^2)+(\tau-3)x_0x_1x_2-(x_0^2x_1+x_1^2x_2+x_2^2x_0),
  \end{array}
\right.
$$
The transformation $M^{3}$ is given by the matrix
$$
\mathrm{M}^{3}=\frac{1}{2}\left(
                     \begin{array}{ccc}
                       1 & \tau & 1-\tau \\
                       \tau & 1-\tau & 1 \\
                       \tau-1 & -1 & -\tau \\
                     \end{array}
                   \right).
$$
Thus the surface $\Xi_{(1,1,-1)}^+$ is defined by equations
$$
\left\{
  \begin{array}{ll}
    x_3=x_0+x_1-x_2, \\
    w=(\tau-2)(x_0x_1^2+x_1x_2^2-x_2x_0^2)-(\tau-3)x_0x_1x_2-(x_0^2x_1-x_1^2x_2+x_2^2x_0).
  \end{array}
\right.
$$
Therefore, the intersection $\Xi_{(1,1,1)}^+\cap \Xi_{(1,-1,-1)}^+$ is
the line in $\Xi_{(1,1,1)}^+$ that is cut out by
an equation~\mbox{$x_2=0$},
so that~\mbox{$\Xi_{(1,1,1)}^+\bullet\Xi_{(1,-1,-1)}^+=1$}.
\end{example}

\begin{example}
The transformation $RN$ is given by the matrix
$$
\mathrm{RN}=\left(
                     \begin{array}{ccc}
                       0 & 0 & 1 \\
                       -1 & 0 & 0 \\
                       0 & -1 & 0 \\
                     \end{array}
                   \right).
$$
Thus the surface $\Xi_{(1,-1,-1)}^+$ is defined in $\P(1,1,1,1,2)$ by equations
$$
\left\{
  \begin{array}{ll}
    x_3=x_0-x_1-x_2, \\
    w=(\tau-2)(x_0x_1^2-x_1x_2^2-x_2x_0^2)+(\tau-3)x_0x_1x_2+(x_0^2x_1+x_1^2x_2-x_2^2x_0).
  \end{array}
\right.
$$
Therefore, the intersection $\Xi_{(1,1,1)}^+\cap \Xi_{(1,-1,-1)}^+$ is defined by equations
\begin{multline*}
\left\{
                                        \begin{array}{ll}
                                          x_1+x_2=0, \\
                                          (\tau-2)(x_1x_2^2+x_2x_0^2)=x_0^2x_1+x_1^2x_2.
                                        \end{array}
                                      \right.
\end{multline*}
This system of equations defines a set consisting of three points,
and~\mbox{$\Xi_{(1,1,1)}^+\bullet\Xi_{(1,-1,-1)}^+=0$}.
\end{example}

In the similar way one gets $\Xi_{v}^{+}=T_v(\Xi_{(1,1,1)}^{+})$,
where $T_v$ is an element in $\A_5$ given by Table~\ref{table:planes}.

\begin{table}[h]
\caption{The surfaces $\Xi_{v}^{+}$}\label{table:planes}
\begin{tabular}{|c|c|c|c|c|c|}
  \hline
\vstrut  $v$ & $(1,1,1)$ & $(1,1,-1)$ & $(1,-1,1)$ & $(-1,1,1)$ & $(1,-1,-1)$ \\
\hline
\vstrut  $T_v$ & $\mathrm{Id}$  & $M^3$ & $R^2M^3$ & $RM^3$ & $RN$ \\
\hline\hline
\vstrut  $v$ & $(-1,1,-1)$ & $(-1,-1,1)$ & $(-1,-1,-1)$ & $(\tau-1, \tau,0)$ & $(1-\tau,\tau,0)$ \\
\hline
\vstrut  $T_v$ & $R^2N$ & $N$ & $M^2N$ & $M^4$ & $RM^2$ \\
\hline\hline
\vstrut  $v$ & $(1-\tau, -\tau, 0)$ & $(\tau-1,-\tau,0)$ & $(\tau, 0, 1-\tau)$ & $(\tau,0,\tau-1)$ & $(-\tau,0,\tau-1)$ \\
\hline
\vstrut  $T_v$ & $MN$ & $RM^4N$ & $M^2$ & $M$ & $M^4N$ \\
\hline\hline
\vstrut  $v$ & $(-\tau,0,1-\tau)$ & $(0,\tau-1,\tau)$ & $(0,\tau-1,-\tau)$ & $(0,1-\tau,-\tau)$ & $(0,1-\tau,\tau)$ \\
\hline
\vstrut  $T_v$ & $M^3N$ & $RM^4$ & $R^2M^4N$ & $RMN$ &$ R^2M^2$ \\
\hline
\end{tabular}
\end{table}

This leads to the following result.

\begin{lemma}
\label{lemma:first-line}
One has
$$
\Xi_{(1,1,1)}^{+}\bullet\Xi_{v}^{+}=\left\{\aligned%
&\phantom{-2}0\ \text{if}\ v=(1,-1,-1), (-1,1,-1), (-1,-1,1), (-1,-1,-1),\\
&\phantom{-20\ \text{if}\ u=}\ (\tau-1,-\tau,0), (-\tau,0,\tau-1), (0,\tau-1,-\tau), \\
&\phantom{-2}1\ \text{if}\ v=(1,1,-1), (1,-1,1), (-1,1,1), (\tau-1, \tau,0), (1-\tau,\tau,0),\\
&\phantom{-21\ \text{if}\ v=}\ (1-\tau, -\tau, 0), (\tau, 0, 1-\tau), (\tau,0,\tau-1), (-\tau,0,1-\tau),\\
&\phantom{-21\ \text{if}\ v=}\ (0,\tau-1,\tau), (0,1-\tau,-\tau), (0,1-\tau,\tau),\\
&\,-2\ \text{if}\ v=(1,1,1).\\
\endaligned
\right.
$$
\end{lemma}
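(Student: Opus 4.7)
The self-intersection $\Xi_{(1,1,1)}^{+}\bullet\Xi_{(1,1,1)}^{+}=-2$ is the adjunction formula on the $K3$ surface $S$, as noted immediately before the lemma. For each of the nineteen values of $v\ne(1,1,1)$ in~\eqref{equation:v} we already know $\Xi_{(1,1,1)}^{+}\bullet\Xi_{v}^{+}\in\{0,1\}$, and the task is to decide in each case whether $\Xi_{(1,1,1)}^{+}\cap\Xi_{v}^{+}$ is a curve or a finite scheme. The plan is to run the explicit computation exhibited in the two examples preceding the lemma for every remaining value of $v$.

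The first step, for each $v$, is to produce defining equations for $\Xi_{v}^{+}$ in $\P(1,1,1,1,2)$. Using the element $T_{v}\in\A_{5}$ listed in Table~\ref{table:planes}, written as a word in the generators $N$, $R$, $M$ whose matrices are recorded above, we apply $T_{v}$ to the two defining equations of $\Xi_{(1,1,1)}^{+}$. The output is one linear equation cutting out the plane $\Xi_{v}$ together with one equation of the form $w=\mathrm{cubic}_{v}(x_{0},x_{1},x_{2},x_{3})$.

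The second step is to analyze $\Xi_{(1,1,1)}^{+}\cap\Xi_{v}^{+}$. The two linear equations define a line $\ell=\Xi_{(1,1,1)}\cap\Xi_{v}\subset\P^{3}$, and by Lemma~\ref{lemma:restriction-double-cubic} the preimage $\pi^{-1}(\ell)$ splits on $X$ as a union of two smooth rational curves meeting in three points. Consequently, on $\ell$ the two cubic expressions for $w$ (obtained from $\Xi_{(1,1,1)}^{+}$ and from $\Xi_{v}^{+}$) satisfy the same identity $w^{2}=(q_{3}^{2}-4l_{1}l_{2}l_{3}l_{4}l_{5}l_{6})|_{\ell}$, so they either coincide or differ by a sign. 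In the coincidence case the intersection contains the component of $\pi^{-1}(\ell)$ lying in $\Xi_{(1,1,1)}^{+}$ and hence $\Xi_{(1,1,1)}^{+}\bullet\Xi_{v}^{+}=1$; in the sign-flip case the intersection consists of the three points where the two components of $\pi^{-1}(\ell)$ cross, and $\Xi_{(1,1,1)}^{+}\bullet\Xi_{v}^{+}=0$. Deciding which of the two possibilities occurs therefore reduces to comparing one cubic polynomial in a single parameter against another.

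The main obstacle is purely bookkeeping: nineteen such comparisons must be executed, each requiring careful tracking of the irrational coefficients in $\tau=\tfrac{1+\sqrt{5}}{2}$ that arise when $M$ is applied. No individual case is conceptually difficult, but the computations must be performed with enough care to confirm that the partition of the nineteen cases matches the one in the statement.
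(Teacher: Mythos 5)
Your proposal is correct and takes essentially the same route as the paper: the paper obtains $\Xi_{v}^{+}=T_{v}(\Xi_{(1,1,1)}^{+})$ from the listed elements $T_{v}\in\A_5$ and decides case by case whether $\Xi_{(1,1,1)}^{+}\cap\Xi_{v}^{+}$ is a line or a finite set of points, exactly as in the two worked examples preceding the lemma. Your reduction of each comparison to a sign check of the two cubics along $\ell=\Xi_{(1,1,1)}\cap\Xi_{v}$ (legitimate since $\ell\not\subset\BB$, so the two square roots of the same restricted polynomial must agree up to sign) is a valid minor streamlining of the same computation.
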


Similarly, we can compute all possible values of $\Xi_{v}^{+}\bullet\Xi_{v'}^{+}$.
They are given in Table~\ref{table:big} below.
The intersection matrix for the surfaces~$\Xi_{v}^{-}$ is the same as one given by Table~\ref{table:big}.

\begin{corollary}\label{corollary:14}
The $20\times 20$ matrix of intersection numbers of the surfaces $\Xi_{v}^{+}$,
where the index $v$ is taken from the list~\eqref{equation:v}, has rank~$14$.
Similarly, the $20\times 20$ matrix of intersection numbers of the surfaces $\Xi_{v}^{-}$ has rank~$14$.
\end{corollary}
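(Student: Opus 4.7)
The plan is to bound the rank of the $20\times 20$ intersection matrix from above and below by~$14$. For the upper bound, I would note that the pairing $\bullet$ is defined by restriction of Weil divisors to a general smooth $K3$ surface $S\in|-K_X|$, so it factors through the restriction map $\Cl(X)\to\mathrm{Pic}(S)$; hence the matrix has rank at most $\rkCl(X)=14$ by Lemma~\ref{lemma:Stefan}.

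For the lower bound I would make the entire matrix explicit using the $\A_5$-symmetry. Lemma~\ref{lemma:first-line} supplies the row indexed by $\Xi_{(1,1,1)}^+$, and since $\A_5$ acts on $X$ by automorphisms it preserves the pairing $\bullet$. Combining this with Table~\ref{table:planes}, which expresses each $\Xi_v^+$ as $T_v\bigl(\Xi_{(1,1,1)}^+\bigr)$ for some $T_v\in\A_5$, every entry reduces via
\begin{equation*}
\Xi_v^+\bullet\Xi_{v'}^+=\Xi_{(1,1,1)}^+\bullet T_v^{-1}T_{v'}\bigl(\Xi_{(1,1,1)}^+\bigr)
\end{equation*}
to one of the values listed in Lemma~\ref{lemma:first-line}. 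The last step is then to exhibit a $14\times 14$ minor of nonzero determinant in the resulting explicit matrix---a concrete but tedious calculation best carried out with a computer algebra system, in the spirit of Lemma~\ref{lemma:splittings}.

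For the surfaces $\Xi_v^-$, my plan is to invoke the covering involution $\iota$ of $\pi\colon X\to\P^3$: it acts as $w\mapsto -w$ and so swaps $\Xi_v^+$ with $\Xi_v^-$, while preserving $\bullet$ since it is an automorphism of $X$. Therefore the intersection matrix of the $\Xi_v^-$ coincides with that of the $\Xi_v^+$ and has the same rank~$14$. The principal obstacle throughout is the numerical verification of the $14\times 14$ nondegenerate minor; the $\A_5$-equivariance makes writing the matrix down routine, but identifying a nondegenerate submatrix of the maximal admissible size is the one step that cannot be avoided. Everything else in the argument is essentially formal.
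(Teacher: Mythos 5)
Your proposal is correct and matches the paper's approach: the paper builds the full $20\times 20$ matrix (Table~\ref{table:big}) exactly as you describe, by transporting the row of Lemma~\ref{lemma:first-line} via the elements $T_v$ of Table~\ref{table:planes}, and then determines the rank by direct (computer) calculation, with the $\Xi_v^-$ case handled by observing that its matrix is identical. Your a priori upper bound via $\rkCl(X)=14$ and the deck-involution argument for the $\Xi_v^-$ are harmless refinements of the same computation.
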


\begin{proof}
Straightforward computation.
\end{proof}

Lemma~\ref{lemma:Stefan} and Corollary~\ref{corollary:14} imply the following result.

\begin{corollary}
\label{corollary:Xi-generate}
The classes of the $20$ surfaces $\Xi_{v}^{+}$,
where the index $v$ is taken from the list~\eqref{equation:v}, generate the $\Q$-vector space
$
\Cl(X)_{\Q}=\Cl(X)\otimes\Q.
$
Similarly, the classes of the $20$ surfaces $\Xi_{v}^{-}$
also generate the $\Q$-vector space $\Cl(X)_{\Q}$.
\end{corollary}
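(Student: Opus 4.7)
The plan is to deduce Corollary~\ref{corollary:Xi-generate} by combining Lemma~\ref{lemma:Stefan} and Corollary~\ref{corollary:14} via a short linear-algebra argument about Gram matrices with respect to the pairing $\bullet$.

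First I would record that the pairing $\bullet$, defined by restricting two Weil divisors to a sufficiently general $K3$ surface $S\in|-K_X|$ and taking the intersection number of the resulting curves on $S$, descends to a well-defined symmetric $\Q$-bilinear form on $\Cl(X)_{\Q}$. The justification is exactly the one used for the Burkhardt quartic just before Lemma~\ref{lemma:intersection-Pi}: every Weil divisor on $X$ is Cartier in a neighbourhood of any smooth point, and choosing $S$ disjoint from $\Sing(X)$ and transverse to the supports of the divisors under consideration makes the restriction a well-defined class in $\mathrm{Pic}(S)$, on which the usual intersection form lives.

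Now let $V^{+}\subseteq\Cl(X)_{\Q}$ denote the subspace spanned by the classes $[\Xi_{v}^{+}]$. The $20\times 20$ matrix of values $\Xi_{v}^{+}\bullet\Xi_{v'}^{+}$ recorded in Table~\ref{table:big} is precisely the Gram matrix of these $20$ generators of $V^{+}$ with respect to the restriction of $\bullet$ to $V^{+}$, so its rank is bounded above by $\dim V^{+}$. Corollary~\ref{corollary:14} identifies this rank as $14$, whence $\dim V^{+}\ge 14$. On the other hand, Lemma~\ref{lemma:Stefan} gives $\dim\Cl(X)_{\Q}=\rkCl(X)=14$, and therefore $V^{+}=\Cl(X)_{\Q}$. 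Applying the identical argument to the intersection matrix of the surfaces $\Xi_{v}^{-}$ (which by assumption agrees with the one for $\Xi_{v}^{+}$) yields the second assertion.

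I do not anticipate any real obstacle here: the substantive input (the rank of the Gram matrix) has already been computed in Corollary~\ref{corollary:14}, and the value of $\rkCl(X)$ is cited from \cite{En99}. The only cosmetic point is the well-definedness of the pairing $\bullet$ on classes, which reduces to the same observation that justified its use in the Burkhardt case.
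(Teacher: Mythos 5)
Your argument is correct and is exactly the deduction the paper intends (the paper simply states that Lemma~\ref{lemma:Stefan} and Corollary~\ref{corollary:14} imply the result, leaving the Gram-matrix rank inequality implicit). The well-definedness of $\bullet$ on classes, which you rightly flag, follows as you say from the fact that a general $S\in|-K_X|$ avoids the nodes, so restriction gives a linear map $\Cl(X)\to\mathrm{Pic}(S)$ compatible with the intersection form.
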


Corollaries~\ref{corollary:Cl-Q} and~\ref{corollary:Xi-generate} imply

\begin{proposition}
\label{proposition:Barth-class-group}
The group $\Cl(X)^G$ is generated by $-K_{X}$.
\end{proposition}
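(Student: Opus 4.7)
The plan is to derive Proposition \ref{proposition:Barth-class-group} as an immediate consequence of Corollaries \ref{corollary:Cl-Q} and \ref{corollary:Xi-generate}. First I would verify that the hypotheses of Corollary \ref{corollary:Cl-Q} apply to $Y=X$ with the group $G=\A_5$. The threefold $X$ is a Fano threefold because $-K_{X}\sim\pi^{*}\mathcal{O}_{\P^{3}}(1)$ is ample (as $\pi$ is finite). Its $65$ singular points are the $\pi$-preimages of the $65$ nodes of the branch divisor $\BB=\Sigma_{15}\cup\Sigma_{20}\cup\Sigma_{30}$; the local equation $w^{2}=\mathrm{(node)}$ of the double cover at a node of the branch shows that each such point is again a node of $X$. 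By Corollary \ref{corollary:Xi-generate} the $20$ surfaces $\Xi_{v}^{+}$, indexed by the coefficient list \eqref{equation:v}, form a single $\A_5$-orbit and their classes generate the $\Q$-vector space $\Cl(X)_{\Q}$, which are exactly the remaining hypotheses of Corollary \ref{corollary:Cl-Q}.

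Applying Corollary \ref{corollary:Cl-Q} I conclude that $\Cl(X)^{\A_{5}}=\mathbb{Z}\cdot H$, where $H$ is an ample Cartier divisor on $X$ with $-K_{X}\sim i_{X}H$ for the largest positive integer $i_{X}$. To finish the proof I only need to check that $i_{X}=1$, so that $H$ may be identified with $-K_{X}$ itself. For this I compute
$$
(-K_{X})^{3}=\bigl(\pi^{*}\mathcal{O}_{\P^{3}}(1)\bigr)^{3}=\deg(\pi)\cdot\mathcal{O}_{\P^{3}}(1)^{3}=2.
$$
If $-K_{X}\sim i_{X}H$, then $i_{X}^{3}\cdot H^{3}=2$, and since $H$ is an ample Cartier divisor on a projective threefold, $H^{3}$ is a positive integer. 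The only solution is $i_{X}=1$ and $H^{3}=2$, hence $H\sim -K_{X}$, which proves the proposition.

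The main conceptual difficulties have already been overcome before this step: the identification of the twenty planes $\Xi_{v}$ (which by Proposition \ref{proposition:Barth-planes} exhaust, together with the six $\Theta_{u}$, the planes whose restriction from $\BB$ is a double cubic), the splitting of each $\pi^{-1}(\Xi_{v})$ into two components $\Xi_{v}^{\pm}$, and the rank computation in Corollary \ref{corollary:14} combined with Endrass's identity $\rkCl(X)=14$ of Lemma \ref{lemma:Stefan}. Thus no genuinely hard step remains here; the only small piece of arithmetic is the index verification $i_{X}=1$, which is immediate from the degree formula above.
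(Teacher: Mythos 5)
Your proposal is correct and follows exactly the paper's route: the paper derives this proposition as an immediate consequence of Corollaries~\ref{corollary:Cl-Q} and~\ref{corollary:Xi-generate}, applied to the single $\A_5$-orbit of the twenty surfaces $\Xi_v^{+}$. Your additional checks (that the $65$ singular points of $X$ are nodes, and that $i_X=1$ because $(-K_X)^3=2$ forces $i_X^3 H^3=2$) are correct and merely make explicit the hypotheses the paper leaves implicit.
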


Now we derive Theorem~\ref{theorem:Barth} from Proposition~\ref{proposition:Barth-class-group}.

\begin{proof}[Proof of Theorem~\ref{theorem:Barth}]
Since $\Cl(X)^G$ is generated by $-K_{X}$ by Proposition~\ref{proposition:Barth-class-group},
the required assertion immediately follows from the proof of \cite[Theorem~A]{CheltsovPark}.
The only difference is that one should use a $G$-equivariant version of the standard Noether--Fano inequality,
which is~\mbox{\cite[Corollary~3.3.3]{CheltsovShramov}}.
\end{proof}

\begin{landscape}
\thispagestyle{empty}

{\fontsize{2.5}{4}\selectfont

\begin{table}
\caption{Intersection matrix for $\Xi_{v}^{+}$}\label{table:big}
\hspace*{-2.2cm}
\arraycolsep=1.4pt\def\arraystretch{4.5}
$\left.
  \begin{array}{|c|c|c|c|c|c|c|c|c|c|c|c|c|c|c|c|c|c|c|c|c|}
       \hline
      &\tabfont{(1,1,1)} &\tabfont{(1,1,-1)} &\tabfont{(1,-1,1)} &\tabfont{(-1,1,1)} &\tabfont{(1,-1,-1)} &\tabfont{(-1,1,-1)} &\tabfont{(-1,-1,1)} &\tabfont{(-1,-1,-1)} &\tabfont{(\tau-1, \tau,0)} &\tabfont{(1-\tau,\tau,0)} &\tabfont{(1-\tau, -\tau, 0)} &\tabfont{(\tau-1,-\tau,0)} &\tabfont{(\tau, 0, 1-\tau)} &\tabfont{(\tau,0,\tau-1)} &\tabfont{(-\tau,0,\tau-1)} &\tabfont{(-\tau,0,1-\tau)} &\tabfont{(0,\tau-1,\tau)} &\tabfont{(0,\tau-1,-\tau)} &\tabfont{(0,1-\tau,-\tau)} &\tabfont{(0,1-\tau,\tau)} \\
     \hline
    _{(1,1,1)}   & \mbox{\normalsize{$-2$}} & \mbox{\normalsize 1} & \mbox{\normalsize 1} & \mbox{\normalsize 1} & \mbox{\normalsize 0} & \mbox{\normalsize 0} & \mbox{\normalsize 0} & \mbox{\normalsize 0} & \mbox{\normalsize 1} & \mbox{\normalsize 1} & \mbox{\normalsize 1} & \mbox{\normalsize 0} & \mbox{\normalsize 1} & \mbox{\normalsize 1} & \mbox{\normalsize 0} & \mbox{\normalsize 1} & \mbox{\normalsize 1} & \mbox{\normalsize 0} & \mbox{\normalsize 1} & \mbox{\normalsize 1}   \\
     \hline
   _{(1,1,-1)}   & \mbox{\normalsize 1} & \mbox{\normalsize $-2$} & \mbox{\normalsize 0} & \mbox{\normalsize 0} & \mbox{\normalsize 1} & \mbox{\normalsize 1} & \mbox{\normalsize 0} & \mbox{\normalsize 0} & \mbox{\normalsize 1} & \mbox{\normalsize 1} & \mbox{\normalsize 1} & \mbox{\normalsize 0} & \mbox{\normalsize 1} & \mbox{\normalsize 1} & \mbox{\normalsize 1} & \mbox{\normalsize 0} & \mbox{\normalsize 0} & \mbox{\normalsize 1} & \mbox{\normalsize 1} & \mbox{\normalsize 1} \\
     \hline
   _{(1,-1,1)}   & \mbox{\normalsize 1} & \mbox{\normalsize 0} & \mbox{\normalsize $-2$} & \mbox{\normalsize 0} & \mbox{\normalsize 1} & \mbox{\normalsize 0} & \mbox{\normalsize 1} & \mbox{\normalsize 0} & \mbox{\normalsize 0} & \mbox{\normalsize 1} & \mbox{\normalsize 1} & \mbox{\normalsize 1} & \mbox{\normalsize 1} & \mbox{\normalsize 1} & \mbox{\normalsize 0} & \mbox{\normalsize 1} & \mbox{\normalsize 1} & \mbox{\normalsize 1} & \mbox{\normalsize 0} & \mbox{\normalsize 1}  \\
     \hline
   _{(-1, 1,1)}   & \mbox{\normalsize 1} & \mbox{\normalsize 0} & \mbox{\normalsize 0} & \mbox{\normalsize $-2$} & \mbox{\normalsize 0} & \mbox{\normalsize 1} & \mbox{\normalsize 1} & \mbox{\normalsize 0} & \mbox{\normalsize 1} & \mbox{\normalsize 1} & \mbox{\normalsize 0} & \mbox{\normalsize 1} & \mbox{\normalsize 1} & \mbox{\normalsize 0} & \mbox{\normalsize 1} & \mbox{\normalsize 1} & \mbox{\normalsize 1} & \mbox{\normalsize 0} & \mbox{\normalsize 1} & \mbox{\normalsize 1}  \\
     \hline
   _{(1,-1,-1)}   & \mbox{\normalsize 0} & \mbox{\normalsize 1} & \mbox{\normalsize 1} & \mbox{\normalsize 0} & \mbox{\normalsize $-2$} & \mbox{\normalsize 0} & \mbox{\normalsize 0} & \mbox{\normalsize 1} & \mbox{\normalsize 0} & \mbox{\normalsize 1} & \mbox{\normalsize 1} & \mbox{\normalsize 1} & \mbox{\normalsize 1} & \mbox{\normalsize 1} & \mbox{\normalsize 1} & \mbox{\normalsize 0} & \mbox{\normalsize 1} & \mbox{\normalsize 1} & \mbox{\normalsize 1} & \mbox{\normalsize 0} \\
     \hline
   _{(-1,1,-1)}   & \mbox{\normalsize 0} & \mbox{\normalsize 1} & \mbox{\normalsize 0} & \mbox{\normalsize 1} & \mbox{\normalsize 0} & \mbox{\normalsize $-2$} & \mbox{\normalsize 0} & \mbox{\normalsize 1} & \mbox{\normalsize 1} & \mbox{\normalsize 1} & \mbox{\normalsize 0} & \mbox{\normalsize 1} & \mbox{\normalsize 0} & \mbox{\normalsize 1} & \mbox{\normalsize 1} & \mbox{\normalsize 1} & \mbox{\normalsize 0} & \mbox{\normalsize 1} & \mbox{\normalsize 1} & \mbox{\normalsize 1}  \\
     \hline
   _{(-1,-1,1)}   & \mbox{\normalsize 0} & \mbox{\normalsize 0} & \mbox{\normalsize 1} & \mbox{\normalsize 1} & \mbox{\normalsize 0} & \mbox{\normalsize 0} & \mbox{\normalsize $-2$} & \mbox{\normalsize 1} & \mbox{\normalsize 1} & \mbox{\normalsize 0} & \mbox{\normalsize 1} & \mbox{\normalsize 1} & \mbox{\normalsize 1} & \mbox{\normalsize 0} & \mbox{\normalsize 1} & \mbox{\normalsize 1} & \mbox{\normalsize 1} & \mbox{\normalsize 1} & \mbox{\normalsize 0} & \mbox{\normalsize 1}  \\
     \hline
   _{(-1,-1,-1)}   & \mbox{\normalsize 0} & \mbox{\normalsize 0} & \mbox{\normalsize 0} & \mbox{\normalsize 0} & \mbox{\normalsize 1} & \mbox{\normalsize 1} & \mbox{\normalsize 1} & \mbox{\normalsize $-2$} & \mbox{\normalsize 1} & \mbox{\normalsize 0} & \mbox{\normalsize 1} & \mbox{\normalsize 1} & \mbox{\normalsize 0} & \mbox{\normalsize 1} & \mbox{\normalsize 1} & \mbox{\normalsize 1} & \mbox{\normalsize 1} & \mbox{\normalsize 1} & \mbox{\normalsize 1} & \mbox{\normalsize 0}  \\
     \hline
   _{(\tau-1, \tau,0)}   & \mbox{\normalsize  1} & \mbox{\normalsize 1} & \mbox{\normalsize 0} & \mbox{\normalsize 1} & \mbox{\normalsize 0} & \mbox{\normalsize 1} & \mbox{\normalsize 1} & \mbox{\normalsize 1} & \mbox{\normalsize $-2$} & \mbox{\normalsize 1} & \mbox{\normalsize 0} & \mbox{\normalsize 1} & \mbox{\normalsize 1} & \mbox{\normalsize 1} & \mbox{\normalsize 0} & \mbox{\normalsize 0} & \mbox{\normalsize 1} & \mbox{\normalsize 1} & \mbox{\normalsize 0} & \mbox{\normalsize 0}  \\
     \hline
   _{(1-\tau,\tau,0)}   & \mbox{\normalsize 1} & \mbox{\normalsize 1} & \mbox{\normalsize 1} & \mbox{\normalsize 1} & \mbox{\normalsize 1} & \mbox{\normalsize 1} & \mbox{\normalsize 0} & \mbox{\normalsize 0} & \mbox{\normalsize 1} & \mbox{\normalsize $-2$} & \mbox{\normalsize 1} & \mbox{\normalsize 0} & \mbox{\normalsize 0} & \mbox{\normalsize 0} & \mbox{\normalsize 1} & \mbox{\normalsize 1} & \mbox{\normalsize 1} & \mbox{\normalsize 1} & \mbox{\normalsize 0} & \mbox{\normalsize 0}  \\
     \hline
   _{(1-\tau, -\tau, 0)}  & \mbox{\normalsize 1} & \mbox{\normalsize 1} & \mbox{\normalsize 1} & \mbox{\normalsize 0} & \mbox{\normalsize 1} & \mbox{\normalsize 0} & \mbox{\normalsize 1} & \mbox{\normalsize 1} & \mbox{\normalsize 0} & \mbox{\normalsize 1} & \mbox{\normalsize $-2$} & \mbox{\normalsize 1} & \mbox{\normalsize 0} & \mbox{\normalsize 0} & \mbox{\normalsize 1} & \mbox{\normalsize 1} & \mbox{\normalsize 0} & \mbox{\normalsize 0} & \mbox{\normalsize 1} & \mbox{\normalsize 1} \\
     \hline
   _{(\tau-1,-\tau,0)}   & \mbox{\normalsize 0} & \mbox{\normalsize 0} & \mbox{\normalsize 1} & \mbox{\normalsize 1} & \mbox{\normalsize 1} & \mbox{\normalsize 1} & \mbox{\normalsize 1} & \mbox{\normalsize 1} & \mbox{\normalsize 1} & \mbox{\normalsize 0} & \mbox{\normalsize 1} & \mbox{\normalsize $-2$} & \mbox{\normalsize 1} & \mbox{\normalsize 1} & \mbox{\normalsize 0} & \mbox{\normalsize 0} & \mbox{\normalsize 0} & \mbox{\normalsize 0} & \mbox{\normalsize 1} & \mbox{\normalsize 1}  \\
     \hline
    _{(\tau, 0, 1-\tau)}  & \mbox{\normalsize 1} & \mbox{\normalsize 1} & \mbox{\normalsize 1} & \mbox{\normalsize 1} & \mbox{\normalsize 1} & \mbox{\normalsize 0} & \mbox{\normalsize 1} & \mbox{\normalsize 0} & \mbox{\normalsize 1} & \mbox{\normalsize 0} & \mbox{\normalsize 0} & \mbox{\normalsize 1} & \mbox{\normalsize $-2$} & \mbox{\normalsize 1} & \mbox{\normalsize 0} & \mbox{\normalsize 1} & \mbox{\normalsize 0} & \mbox{\normalsize 1} & \mbox{\normalsize 1} & \mbox{\normalsize 0}  \\
     \hline
    _{(\tau,0,\tau-1)}  & \mbox{\normalsize 1} & \mbox{\normalsize 1} & \mbox{\normalsize 1} & \mbox{\normalsize 0} & \mbox{\normalsize 1} & \mbox{\normalsize 1} & \mbox{\normalsize 0} & \mbox{\normalsize 1} & \mbox{\normalsize 1} & \mbox{\normalsize 0} & \mbox{\normalsize 0} & \mbox{\normalsize 1} & \mbox{\normalsize 1} & \mbox{\normalsize $-2$} & \mbox{\normalsize 1} & \mbox{\normalsize 0} & \mbox{\normalsize 1} & \mbox{\normalsize 0} & \mbox{\normalsize 0} & \mbox{\normalsize 1}  \\
     \hline
   _{(-\tau,0,\tau-1)}   & \mbox{\normalsize 0} & \mbox{\normalsize 1} & \mbox{\normalsize 0} & \mbox{\normalsize 1} & \mbox{\normalsize 1} & \mbox{\normalsize 1} & \mbox{\normalsize 1} & \mbox{\normalsize 1} & \mbox{\normalsize 0} & \mbox{\normalsize 1} & \mbox{\normalsize 1} & \mbox{\normalsize 0} & \mbox{\normalsize 0} & \mbox{\normalsize 1} & \mbox{\normalsize $-2$} & \mbox{\normalsize 1} & \mbox{\normalsize 1} & \mbox{\normalsize 0} & \mbox{\normalsize 0} & \mbox{\normalsize 1}  \\
     \hline
    _{(-\tau,0,1-\tau)}  & \mbox{\normalsize 1} & \mbox{\normalsize 0} & \mbox{\normalsize 1} & \mbox{\normalsize 1} & \mbox{\normalsize 0} & \mbox{\normalsize 1} & \mbox{\normalsize 1} & \mbox{\normalsize 1} & \mbox{\normalsize 0} & \mbox{\normalsize 1} & \mbox{\normalsize 1} & \mbox{\normalsize 0} & \mbox{\normalsize 1} & \mbox{\normalsize 0} & \mbox{\normalsize 1} & \mbox{\normalsize $-2$} & \mbox{\normalsize 0} & \mbox{\normalsize 1} & \mbox{\normalsize 1} & \mbox{\normalsize 0}  \\
     \hline
    _{(0,\tau-1,\tau)}  & \mbox{\normalsize 1} & \mbox{\normalsize 0} & \mbox{\normalsize 1} & \mbox{\normalsize 1} & \mbox{\normalsize 1} & \mbox{\normalsize 0} & \mbox{\normalsize 1} & \mbox{\normalsize 1} & \mbox{\normalsize 1} & \mbox{\normalsize 1} & \mbox{\normalsize 0} & \mbox{\normalsize 0} & \mbox{\normalsize 0} & \mbox{\normalsize 1} & \mbox{\normalsize 1} & \mbox{\normalsize 0} & \mbox{\normalsize $-2$} & \mbox{\normalsize 1} & \mbox{\normalsize 0} & \mbox{\normalsize 1}  \\
     \hline
    _{(0,\tau-1,-\tau)}  & \mbox{\normalsize 0} & \mbox{\normalsize 1} & \mbox{\normalsize 1} & \mbox{\normalsize 0} & \mbox{\normalsize 1} & \mbox{\normalsize 1} & \mbox{\normalsize 1} & \mbox{\normalsize 1} & \mbox{\normalsize 1} & \mbox{\normalsize 1} & \mbox{\normalsize 0} & \mbox{\normalsize 0} & \mbox{\normalsize 1} & \mbox{\normalsize 0} & \mbox{\normalsize 0} & \mbox{\normalsize 1} & \mbox{\normalsize 1} & \mbox{\normalsize $-2$} & \mbox{\normalsize 1} & \mbox{\normalsize 0} \\
     \hline
    _{(0,1-\tau,-\tau)}  & \mbox{\normalsize 1} & \mbox{\normalsize 1} & \mbox{\normalsize 0} & \mbox{\normalsize 1} & \mbox{\normalsize 1} & \mbox{\normalsize 1} & \mbox{\normalsize 0} & \mbox{\normalsize 1} & \mbox{\normalsize 0} & \mbox{\normalsize 0} & \mbox{\normalsize 1} & \mbox{\normalsize 1} & \mbox{\normalsize 1} & \mbox{\normalsize 0} & \mbox{\normalsize 0} & \mbox{\normalsize 1} & \mbox{\normalsize 0} & \mbox{\normalsize 1} & \mbox{\normalsize $-2$} & \mbox{\normalsize 1} \\
     \hline
    _{(0,1-\tau,\tau)}  & \mbox{\normalsize 1} & \mbox{\normalsize 1} & \mbox{\normalsize 1} & \mbox{\normalsize 1} & \mbox{\normalsize 0} & \mbox{\normalsize 1} & \mbox{\normalsize 1} & \mbox{\normalsize 0} & \mbox{\normalsize 0} & \mbox{\normalsize 0} & \mbox{\normalsize 1} & \mbox{\normalsize 1} & \mbox{\normalsize 0} & \mbox{\normalsize 1} & \mbox{\normalsize 1} & \mbox{\normalsize 0} & \mbox{\normalsize 1} & \mbox{\normalsize 0} & \mbox{\normalsize 1} & \mbox{\normalsize $-2$}  \\
           \hline
  \end{array}
\right.$
\end{table}
}
\end{landscape}

\newpage

\end{document}